\author{Daniyar Shamkanov\thanks{Supported by RFBR (11-01-00281-a, 11-01-00947-a, 12-01-00888-a) and  the  Program of Support for Leading Scientific Schools of Russia (NSh-5593.2012.1).}\\ \normalsize{\textit{Steklov Mathematical Institute of the Russian Academy of Sciences}}\\
\normalsize{\textit{National
Research University Higher School of Economics}}\\
\normalsize{\textit{daniyar.shamkanov@gmail.com}}\\
}
\title{Nested Sequents for Provability Logic $\mathsf{GLP}$}
\newtheorem{thm}{Theorem}[section]
\newtheorem{prop}[thm]{Proposition}
\newtheorem{lem}[thm]{Lemma}
\newtheorem{cor}[thm]{Corollary}
\theoremstyle{remark}
\begin{document}
\maketitle

\begin{abstract}
We present a proof system for the provability logic $\mathsf{GLP}$ in the formalism of nested sequents and prove the cut elimination theorem for it. As an application, we obtain the reduction of $\mathsf{GLP}$ to its important fragment called $\mathsf{J}$ syntactically.\\\\
\textit{Keywords:} provability logic,  nested sequents, cut elimination.
\end{abstract}



The polymodal provability logic $\mathsf{GLP}$ introduced by G.~Japaridze \cite{Jap86} is a well-known modal logic, which has important applications in proof theory and ordinal analysis of arithmetic \cite{Bek05}. This logic is complete w.r.t. the arithmetical semantics where modalities correspond to reflection principles of restricted logical complexity in arithmetic.
Though $\mathsf{GLP}$ was extensively studied \cite{Ign92, Ign93, Bo93, BJV05, Shap08, Bek10a, BBI10, Pak12, BG13}, the question of finding an appropriate cut-free formulation for this logic seemingly remained open (Problem 3 from \cite{Prob12}).
In the present paper we introduce a proof system for $\mathsf{GLP}$ in the formalism of nested sequents\footnote{The idea to apply nested sequents in proof theory of $\mathsf{GLP}$ is due to Kai Br\"{u}nnler.} and prove the cut elimination theorem for it. 
The notion of nested sequent, invented several times independently (see \cite{Bu92, Ka94, Bru09, Pog10, Fit12}), naturally generalises both the notion of sequent (which is a nested
sequent of depth zero) and the notion of hypersequent (which is essentially a nested sequent of depth one). In brief, a nested sequent is a tree of ordinary sequents.

Many investigations in $\mathsf{GLP}$ employ a fragment of $\mathsf{GLP}$ denoted $\mathsf{J}$ \cite{Bek10a}. The reduction of $\mathsf{GLP}$ to $\mathsf{J}$ was first established in \cite{Bek10a} by involved model-theoretic arguments. Other proofs, on the basis of arithmetical semantics and topological semantics, were given in \cite{Bek11} and \cite{BG13}, respectively. In this note, we give a syntactic proof of the same reduction as an application of the cut elimination.

The plan of the paper is as follows:
in Section 1  we recall the Hilbert-style axiomatization of $\mathsf{GLP}$ and define the notion of nested sequent in the context of polymodal logic; in Section 2 we present the nested sequent formulation of $\mathsf{GLP}$ and obtain admissibility of basic structural rules; in Section 3 we prove the cut elimination theorem, which follows the corresponding proofs for the provability logic $\mathsf{GL}$ (see \cite{Bo83, Min05, Val83, GorRam12, Sas01}) and for systems of nested sequents \cite{Bru09}; in the final section we establish the reduction of $\mathsf{GLP}$ to its important fragment called $\mathsf{J}$ syntactically. 


\section{Preliminaries}
The polymodal provability logic $\mathsf{GLP}$ is a propositional modal logic in a language with infinitely many modalities $\Box_0$, $\Box_1$, $\Box_2$, etc. The dual connectives are denoted by $\Diamond_0$, $\Diamond_1$, $\Diamond_2$, etc.

\textit{Formulas} of $\mathsf{GLP}$, denoted by $A$, $B$, $C$, are built up as follows:
$$ A ::= p \,\,|\,\, \overline{p} \,\,|\,\, \large{\top} \,\,|\,\, \bot \,\,|\,\, (A \wedge A) \,\,|\,\,(A \vee A) \,\,|\,\, \Box_i A \,\,|\,\, \Diamond_i A \;, $$
where $p$ and $\overline{p}$ stand for atoms and their complements.

Let the \textit{complexity} $\lvert A \rvert$ of a formula $A$ be
\begin{align*}
& \lvert p \rvert = \lvert \overline{p} \rvert :=1,\\
& \lvert \top \rvert = \lvert \bot \rvert := 1,\\
& \lvert \Box_i A \rvert = \lvert \Diamond_i A \rvert := \lvert A \rvert +1,\\
& \lvert A \wedge B\rvert = \lvert A \vee B \rvert := \max \{\lvert A \rvert, \lvert B \rvert\} +1.
\end{align*}

The \textit{negation} $\overline{A}$ of a formula $A$ is defined in the usual way by De Morgan's laws, the law of double negation and the duality laws for the modal operators, i.e. we inductively define
\begin{gather*}
\overline{(p)} := \overline{p},\qquad
\overline{\overline{p}} := p,\\
\overline{\top} := \bot,\qquad
\overline{\bot} := \top,\\
\overline{(A \wedge B)} := (\overline{A} \vee \overline{B}),\qquad
\overline{(A \vee B)} := (\overline{A} \wedge \overline{B}),\\
\overline{\Box_i A} := \Diamond_i \overline{A},\qquad
\overline{\Diamond_i A} := \Box_i \overline{A}.
\end{gather*}

We also put
\begin{gather*}
A\rightarrow B := \overline{A} \vee B, \qquad A \leftrightarrow B := (A\rightarrow B) \wedge
(B\rightarrow A).
\end{gather*}

The Hilbert-style axiomatization of $\mathsf{GLP}$ is as follows:\\

\textit{Axioms:}
\begin{itemize}
\item[(i)] Boolean tautologies;
\item[(ii)] $\Box_i (A \rightarrow B) \rightarrow (\Box_i A \rightarrow \Box_i B)$;
\item[(iii)] $\Box_i ( \Box_i A \rightarrow A) \rightarrow \Box_i A$;
\item[(iv)] $\Diamond_i A \rightarrow \Box_j \Diamond_i A$ for $i < j$;
\item[(v)] $\Box_i A \rightarrow \Box_j A$ for $i \leqslant j$.
\end{itemize}

\textit{Rules:} modus ponens, $A / \Box_i A$. \\

In order to define a cut-free sequent system for $\mathsf{GLP}$ we adopt so-called nested (deep) sequents \cite{Bru09}.  
A \textit{nested sequent}, denoted by $\Gamma$, $\Delta$, $\Sigma$, $\Upsilon$, is inductively defined as a finite multiset of formulas and expressions of the form $[\Gamma]_i$, where $\Gamma$ is a nested sequent and $i$ is a natural number.
Nested sequents are often written without any curly braces, and the comma in the expression $\Gamma, \Delta$ means the multiset union. In the following nested sequents are referred merely as sequents.

For a sequent $\Gamma = A_1, \ldots ,A_n, [\Delta_1]_{i_1}, \ldots , [\Delta_m]_{i_m}$, its intended interpretation as a formula is
\begin{gather*}
\Gamma^\sharp :=
\begin{cases}
\bot & \text{if $n = m = 0$,}\\
 A_1 \vee \ldots \vee A_n \vee \Box_{i_1} \Delta^\sharp_1 \vee \ldots \vee \Box_{i_m} \Delta^\sharp_m & \text{otherwise.}
\end{cases}
\end{gather*}

Every sequent $\Gamma$ has a \textit{corresponding tree}, denoted \textit{tree}($\Gamma$), whose edges are marked with natural numbers and nodes are marked with multisets of formulas. The corresponding tree of the above sequent is  
\begin{center}
\begin{tikzpicture}
\tikzset{level distance=40pt, sibling distance=30pt, edge from parent/.style={-latex}}
\Tree [.\text{$A_1, \ldots , A_n $} \edge[draw] node[above, near end]{$i_1$}; \text{\emph{tree}($\Delta_1$)} \edge[draw] node[midway, below]{$i_2$}; \text{\emph{tree}($\Delta_2$)} \edge[-] node{}; \text{$\ldots$}  \edge[draw] node[midway, below]{$i_{m-1}$}; \text{\emph{tree}($\Delta_{m-1}$)} \edge[draw] node[near end, above]{$i_m$}; \text{\emph{tree}($\Delta_m$)} ] 
\end{tikzpicture}
\end{center}


A \textit{unary context} is defined as a sequent with a hole $\{ \:\}$, taking the place of a formula. Unary contexts will be denoted by $\Gamma \{\: \} $, $\Delta \{ \:\}$, $\Sigma \{ \:\}$. Given a unary context $\Gamma \{ \: \}$ and a sequent $\Upsilon$, we can obtain the sequent $\Gamma \{ \Upsilon \}$ by filling the hole in $\Gamma \{ \:\}$ with $\Upsilon$. In the following, we also use the notion of a \textit{sequent context} with multiple holes, which is defined as a sequent with $n$ different holes such that each hole takes the place of a formula and occurs exactly once in a context. A context with $n$ holes is denoted by \[\Gamma \underbrace{\{ \: \}\ldots\{ \: \}}_{n \text{ times}} \;.\] 
Notions of a context and the operation of filling holes are accurately defined in \cite{Bru10}.  


\section{The Sequent Calculus}
Now we introduce the sequent-style proof system for the provability logic $\mathsf{GLP}$.
The cut elimination theorem will be proved in the next two sections.

The sequent calculus $\mathsf{GLP_{NS}}$ is defined by the following initial sequents and inference rules: 
\subparagraph*{Initial sequents:}
\begin{gather*}
\AXC{ $\Gamma \{ p, \overline{p} \}$}
\DisplayProof \qquad
\AXC{ $\Gamma \{ \top \}$}
\DisplayProof
\end{gather*}
\subparagraph*{Propositional rules:}
\begin{gather*}
\AXC{$\Gamma \{ A \}$}
\AXC{$\Gamma \{ B \}$}
\LeftLabel{$\mathsf{\wedge}$}
\BIC{$\Gamma \{ A \wedge B \}$}
\DisplayProof \qquad
\AXC{$\Gamma \{ A,B \}$}
\LeftLabel{$\mathsf{\vee}$}
\UIC{$\Gamma \{ A \vee B \}$}
\DisplayProof
\end{gather*}
\subparagraph*{Modal rules:}
\begin{gather*}
\AXC{$\Gamma \{ [A, \Diamond_i \overline{A}]_i \}$}
\LeftLabel{$\mathsf{\Box}$}
\UIC{$\Gamma \{ \Box_i A\}$}
\DisplayProof \quad 
\AXC{$\Gamma \{\Diamond_i A, [A, \Delta ]_j\}$}
\LeftLabel{$\mathsf{\Diamond}$}
\RightLabel{$(i\leqslant j)$}
\UIC{$\Gamma \{\Diamond_i A, [\Delta ]_j\}$}
\DisplayProof\\\\
\AXC{$\Gamma \{\Diamond_i A, [\Diamond_i A, \Delta ]_j\}$}
\LeftLabel{$\mathsf{tran}$}
\RightLabel{$(i\leqslant j)$}
\UIC{$\Gamma \{\Diamond_i A, [\Delta ]_j\}$}
\DisplayProof  \quad 
\AXC{$\Gamma \{\Diamond_i A, [\Diamond_i A, \Delta ]_j\}$}
\LeftLabel{$\mathsf{eucl}$}
\RightLabel{$(i < j)$}
\UIC{$\Gamma \{ [\Diamond_i A, \Delta ]_j\}$}
\DisplayProof
\end{gather*}
\begin{center}
\textbf{Fig. 1}\, System $\mathsf{GLP_{NS}}$
\end{center}

In these inference rules, explicitly displayed formulas in the premises are called \emph{introducing} or \emph{auxiliary formulas} and explicitly displayed formulas in the conclusions are called \emph{introduced} or \emph{principal formulas}. The \emph{principal position} of an inference with the conclusion $\Gamma$ and the principal formula $A$ is a sequent context $\Delta \{ \: \}$ such that $\Gamma = \Delta \{ A\}$.

Recall that a \emph{derivation} in a sequent calculus is a finite tree whose nodes are marked
by sequents that is constructed according to the rules of the sequent calculus.
A \emph{proof} is defined as a derivation, where all leaves are labelled with initial sequents. A sequent $\Gamma$ is \emph{provable} in a sequent calculus if there is a proof with the root marked by $\Gamma$.
\begin{lem}\label{GenAx}
For any formula $A$, we have $\mathsf{GLP_{NS}} \vdash \Gamma \{ A, \overline{A}\}$. 
\end{lem}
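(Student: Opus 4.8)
The plan is to prove $\mathsf{GLP_{NS}} \vdash \Gamma \{A, \overline{A}\}$ by induction on the complexity $\lvert A \rvert$ of the formula $A$, for an arbitrary unary context $\Gamma\{\:\}$. The statement generalises the atomic initial sequents $\Gamma\{p, \overline p\}$ to arbitrary formulas, so the natural strategy is to peel off the outermost connective of $A$ and reduce to subformulas of strictly smaller complexity, which is exactly the induction hypothesis. Crucially, the induction hypothesis must be stated for \emph{all} contexts simultaneously, since the inductive steps for the modal operators will need to apply it inside a freshly created bracketed node $[\,\cdots\,]_i$, i.e.\ in a different context from the one in the conclusion.

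First I would handle the base cases. If $A = p$ then $\overline A = \overline p$ and $\Gamma\{p, \overline p\}$ is an initial sequent; symmetrically for $A = \overline p$. If $A = \top$ then $\Gamma\{\top, \bot\}$ is an initial sequent (matching $\Gamma\{\top\}$ after discarding the extra formula $\bot$ — here one reads the hole as sitting in a context that already carries the other formula), and dually for $A = \bot$. Next come the propositional cases. For $A = B \wedge C$, so $\overline A = \overline B \vee \overline C$: I would apply the $\vee$ rule to reduce the goal $\Gamma\{B \wedge C, \overline B \vee \overline C\}$ to $\Gamma\{B \wedge C, \overline B, \overline C\}$, then apply the $\wedge$ rule to split into the two premises $\Gamma\{B, \overline B, \overline C\}$ and $\Gamma\{C, \overline B, \overline C\}$, each of which is an instance of the induction hypothesis (for $B$ and for $C$ respectively, in the enlarged context). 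The case $A = B \vee C$ is symmetric.

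The modal cases are where the real work sits, and the $\Box$ case is the one I expect to be the main obstacle. Take $A = \Box_i B$, so $\overline A = \Diamond_i \overline B$; the goal is $\Gamma\{\Box_i B, \Diamond_i \overline B\}$. Reading the $\mathsf{\Box}$ rule with principal formula $\Box_i B$ inside the context $\Gamma\{\,\Diamond_i\overline B,\{\:\}\,\}$, it suffices to derive the premise $\Gamma\{\Diamond_i \overline B, [B, \Diamond_i \overline B]_i\}$. Now I would invoke the $\mathsf{\Diamond}$ rule (with $i \leqslant i$) backwards: its conclusion $\Gamma\{\Diamond_i \overline B, [\Delta]_i\}$ reduces to the premise $\Gamma\{\Diamond_i \overline B, [\,\overline B, \Delta\,]_i\}$, which with $\Delta = \Diamond_i \overline B$ gives the target premise $\Gamma\{\Diamond_i \overline B, [B, \Diamond_i \overline B]_i\}$ once we note $[B,\Diamond_i\overline B]_i$ already contains the formula $B$. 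So applying $\mathsf{\Diamond}$ to the sequent $\Gamma\{\Diamond_i \overline B, [\,\overline B, B\,]_i\}$ yields exactly what we need, and $\Gamma\{\Diamond_i\overline B, [\,\overline B, B\,]_i\}$ is an instance of the induction hypothesis for $B$ applied in the context with a new node $[\,\cdots\,]_i$ holding $\Diamond_i \overline B$ at its parent. The case $A = \Diamond_i B$ is dual via the same two rules. The delicate point throughout is bookkeeping the contexts correctly — verifying that each backward rule application is legitimate (side conditions $i \leqslant j$ satisfied, principal position correctly identified) and that the premise one lands on is genuinely a smaller instance of the same statement; this is routine but must be checked carefully, and it is precisely the need to quantify over all contexts that makes the induction go through.
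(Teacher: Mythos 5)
Your overall strategy---induction on the structure of $A$ with the induction hypothesis quantified over all contexts---is exactly the intended proof, and your atomic and propositional cases are correct. But the modal case, which you rightly identify as the crux, does not go through as you have written it. The premise of the $\mathsf{\Box}$ rule with conclusion $\Gamma\{\Diamond_i\overline{B},\Box_i B\}$ is $\Gamma\{\Diamond_i \overline{B}, [B, \Diamond_i \overline{B}]_i\}$: the bracket must contain the diagonal formula $\Diamond_i \overline{B}$, not just $B$. Your final derivation starts from the induction-hypothesis instance $\Gamma\{\Diamond_i \overline{B}, [\overline{B}, B]_i\}$, in which $\Diamond_i \overline{B}$ sits only at the parent node; applying $\mathsf{\Diamond}$ to this sequent (which removes the auxiliary copy of $\overline{B}$ from the bracket) yields $\Gamma\{\Diamond_i \overline{B}, [B]_i\}$, and the $\mathsf{\Box}$ rule is simply not applicable to that sequent, since its premise requires $[B, \Diamond_i \overline{B}]_i$. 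Moreover, $\mathsf{GLP_{NS}}$ itself has no weakening rule with which to insert the missing formula: weakening is only proved admissible later (Lemma \ref{admissible structural rules}), for the system extended by cut, and the development there already presupposes the present lemma. The instantiation you give in the middle of the paragraph (``$\Delta = \Diamond_i \overline{B}$'') is likewise off: it produces premise $\Gamma\{\Diamond_i \overline{B}, [\overline{B}, \Diamond_i \overline{B}]_i\}$ and conclusion $\Gamma\{\Diamond_i \overline{B}, [\Diamond_i \overline{B}]_i\}$, neither of which is the sequent you need.

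The repair is small, but it is precisely the bookkeeping you yourself flagged as the delicate point: apply the induction hypothesis in the context $\Gamma\{\Diamond_i \overline{B}, [\{\:\}, \Diamond_i \overline{B}]_i\}$, i.e.\ with the diagonal formula placed \emph{inside} the new bracket from the start, obtaining $\Gamma\{\Diamond_i \overline{B}, [B, \overline{B}, \Diamond_i \overline{B}]_i\}$. Then the $\mathsf{\Diamond}$ rule (with $i \leqslant i$, auxiliary formula $\overline{B}$, and side multiset $\Delta = B, \Diamond_i \overline{B}$) removes only that auxiliary $\overline{B}$ and gives $\Gamma\{\Diamond_i \overline{B}, [B, \Diamond_i \overline{B}]_i\}$, to which $\mathsf{\Box}$ applies and yields $\Gamma\{\Box_i B, \Diamond_i \overline{B}\}$ as desired. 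With this correction (and the observation that the case $A = \Diamond_i B$ is literally the same case, since the unordered pair $\{A,\overline{A}\}$ is then $\{\Diamond_i B, \Box_i\overline{B}\}$), your argument matches the standard structural induction the paper intends.
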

\begin{proof}
Standard induction on the structure of $A$.
\end{proof}
The \emph{height} $\lvert \pi \rvert$ of a proof $\pi$ is the length of the longest branch in $\pi$.
A proof only consisting of an initial sequent has height $0$.
An inference rule is called \textit{admissible} (for a given proof system) if, for every instance of the rule, the conclusion is provable whenever all premises are provable. Let the cut rule, which will be proved to be admissible for $\mathsf{GLP_{NS}}$, be
\[\AXC{$\Gamma \{A\}$}
\AXC{$\Gamma \{\overline{A}\}$} \LeftLabel{$\mathsf{cut}$}
\RightLabel{ .} 
\BIC{$\Gamma \{ \emptyset \}$} 
\DisplayProof 
\]
Define the principal position of the cut rule as $\Gamma \{ \: \}$.
\begin{lem} \label{aux}
The following rules
\[
\AXC{$\Gamma \{\emptyset \} $}
\UIC{$\Gamma \{A \} $} 
\DisplayProof \quad
\AXC{$\Gamma$}
\UIC{$[\Gamma]_i $} 
\DisplayProof \quad
\AXC{$\Gamma \{A \vee B \} $}
\UIC{$\Gamma \{A,B \} $} 
\DisplayProof \quad
\AXC{$\Gamma \{\bot \} $}
\UIC{$\Gamma \{\emptyset\} $} 
\DisplayProof \quad
\AXC{$\Gamma \{\Box_i A \} $}
\UIC{$\Gamma \{[A]_i \} $} 
\DisplayProof
\]
are admissible for $\mathsf{GLP_{NS}} + \mathsf{cut}$.
\end{lem}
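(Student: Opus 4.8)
The plan is to prove the admissibility of each of the five rules separately, treating them as independent claims about $\mathsf{GLP_{NS}} + \mathsf{cut}$. For each rule, I would argue that given a proof of the premise, one can construct a proof of the conclusion. The most uniform strategy is to reduce each rule to an application of $\mathsf{cut}$ against a suitable provable sequent, invoking Lemma~\ref{GenAx} to supply the needed instances of $\Gamma\{A,\overline{A}\}$.

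Concretely, I would handle the rules as follows. For weakening $\Gamma\{\emptyset\}/\Gamma\{A\}$, the cleanest route is an induction on the height of the given proof of $\Gamma\{\emptyset\}$, inserting $A$ into the hole position throughout; since every initial sequent and every rule of $\mathsf{GLP_{NS}}$ is preserved under such insertion (the rules are formulated schematically with contexts), the induction goes through directly. The nesting rule $\Gamma/[\Gamma]_i$ is likewise immediate by a height induction, wrapping the entire derivation inside $[\,\cdot\,]_i$, observing that each initial sequent and each inference rule remains a valid instance when placed under an additional modality. For the $\vee$-inversion rule $\Gamma\{A\vee B\}/\Gamma\{A,B\}$, I would cut the premise $\Gamma\{A\vee B\}$ against $\Gamma\{\overline{A}\wedge\overline{B}\}$; the latter equals $\Gamma\{\overline{A\vee B}\}$ after noting $\overline{A\vee B}=\overline{A}\wedge\overline{B}$, and is provable because from the generalized axiom instances $\Gamma\{A,B,\overline{A}\}$ and $\Gamma\{A,B,\overline{B}\}$ (Lemma~\ref{GenAx}) one derives $\Gamma\{A,B,\overline{A}\wedge\overline{B}\}$ by the $\wedge$-rule, and then the cut against $\Gamma\{A\vee B\}$ placed in the right context yields $\Gamma\{A,B\}$.

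The last two rules follow the same cut-based pattern. For $\Gamma\{\bot\}/\Gamma\{\emptyset\}$, I would cut $\Gamma\{\bot\}$ against $\Gamma\{\top\}$, the latter being an initial sequent, using that $\overline{\bot}=\top$; the conclusion of this cut is exactly $\Gamma\{\emptyset\}$. For the rule $\Gamma\{\Box_i A\}/\Gamma\{[A]_i\}$, the target $\Gamma\{[A]_i\}$ should be obtained by cutting the premise $\Gamma\{\Box_i A\}$ against a proof of $\Gamma\{[A]_i, \Diamond_i\overline{A}\}$, since $\overline{\Box_i A}=\Diamond_i\overline{A}$ and this $\Diamond_i\overline{A}$ must sit in the appropriate outer position relative to the box $[A]_i$; here I would build $\Gamma\{[A]_i,\Diamond_i\overline{A}\}$ from the generalized axiom $\Gamma\{[A,\overline{A}]_i\}$ via the $\mathsf{\Diamond}$-rule (with $i\leqslant i$), extracting the $\overline{A}$ from inside the box up to a $\Diamond_i$ at the same level as the box.

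I expect the main obstacle to be the bookkeeping for the last rule: one must verify that the $\mathsf{\Diamond}$-rule legitimately moves $\overline{A}$ out of $[A,\overline{A}]_i$ to produce $\Diamond_i\overline{A}$ at the correct context position, and that after the cut on $\Diamond_i\overline{A}$ the residual sequent is precisely $\Gamma\{[A]_i\}$ with no stray occurrences. Getting the holes and modal levels to line up—so that the cut formula $\Box_i A / \Diamond_i\overline{A}$ occupies the same principal position $\Gamma\{\:\}$ in both premises—is the delicate point, whereas weakening, nesting, and the $\bot$-rule are essentially mechanical inductions or one-line cuts.
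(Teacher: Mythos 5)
Your proposal is correct and follows essentially the same route as the paper, which likewise establishes the first two rules by induction on proof height and the remaining three "immediately using the cut rule, Lemma \ref{GenAx} and admissibility of the first rule" (weakening), exactly as you do. One small repair in your last case: the $\mathsf{\Diamond}$-rule does not produce $\Diamond_i \overline{A}$ (it only deletes a formula inside a box when the diamond is already present outside), so you should start from the generalized axiom instance $\Gamma \{ \Diamond_i \overline{A}, [A, \overline{A}]_i \}$ rather than $\Gamma \{ [A, \overline{A}]_i \}$, apply $\mathsf{\Diamond}$ with $j = i$ to delete the inner $\overline{A}$ and obtain $\Gamma \{ \Diamond_i \overline{A}, [A]_i \}$, and then cut this against the weakened premise $\Gamma \{ [A]_i, \Box_i A \}$ as you intended.
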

\begin{proof}
Admissibility of the first two rules is established by induction on the heights of proofs. Cases of other three rules are immediately established using the cut rule, Lemma \ref{GenAx} and admissibility of the first rule.
\end{proof}

Now we state the connection between $\mathsf{GLP_{NS}}$ and $\mathsf{GLP}$: 
\begin{prop}
$\mathsf{GLP_{NS}} + \mathsf{cut} \vdash \Gamma \Longleftrightarrow \mathsf{GLP} \vdash  \Gamma^\sharp$.
\end{prop}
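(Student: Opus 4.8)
The statement is the adequacy of $\mathsf{GLP_{NS}}+\mathsf{cut}$ with respect to the Hilbert calculus, so the plan is to prove the two implications separately, the forward one being soundness and the backward one completeness. Both rest on two congruence properties of the map $\Gamma\{\cdot\}\mapsto\Gamma\{\cdot\}^\sharp$, which I would isolate as lemmas proved by induction on the context. The first is \emph{monotonicity}: if $\mathsf{GLP}\vdash\Upsilon_1^\sharp\to\Upsilon_2^\sharp$ then $\mathsf{GLP}\vdash\Gamma\{\Upsilon_1\}^\sharp\to\Gamma\{\Upsilon_2\}^\sharp$ (a hole, though a formula position, is filled with a sequent, whose interpretation $\Upsilon^\sharp$ is inserted as a disjunct). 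The inductive step uses only the normality axiom (ii), necessitation and Boolean logic, so it holds in $\mathsf{GLP}$. The second is a \emph{conjunction} lemma, $\mathsf{GLP}\vdash(\Gamma\{A\}^\sharp\wedge\Gamma\{B\}^\sharp)\to\Gamma\{A\wedge B\}^\sharp$, again by induction on the context, the step invoking $\Box_i X\wedge\Box_i Y\to\Box_i(X\wedge Y)$ and distributivity of $\vee$ over $\wedge$. As by-products, monotonicity yields a congruence (replacement of provably equivalent fillings) and the lifting fact $\mathsf{GLP}\vdash\Gamma\{\top\}^\sharp$, whence $\mathsf{GLP}\vdash\Upsilon^\sharp$ implies $\mathsf{GLP}\vdash\Gamma\{\Upsilon\}^\sharp$.

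For soundness ($\Rightarrow$) I would argue by induction on a proof of $\Gamma$ in $\mathsf{GLP_{NS}}+\mathsf{cut}$, checking that each rule preserves provability of the interpretation. The initial sequents are handled by the lifting fact together with $\mathsf{GLP}\vdash p\vee\bar p$. For every one-premise rule it suffices, by monotonicity, to verify the implication between the interpretations of the active fillings, and here each rule mirrors an axiom: the $\vee$-rule is trivial; the $\Box$-rule reduces to $\Box_i(A\vee\Diamond_i\bar A)\to\Box_i A$, which is Löb's axiom (iii) since $A\vee\Diamond_i\bar A\leftrightarrow(\Box_i A\to A)$; the $\Diamond$-rule reduces to $\Box_j(A\vee\Delta^\sharp)\wedge\Box_i\bar A\to\Box_j\Delta^\sharp$, valid by axiom (v) for $i\le j$; and the $\mathsf{eucl}$-rule reduces to $\Diamond_i A\to\Box_j(\Diamond_i A\vee\Delta^\sharp)$, immediate from axiom (iv) for $i<j$. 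The $\wedge$-rule and the $\mathsf{cut}$-rule are dispatched by the conjunction lemma (for $\mathsf{cut}$, take $B:=\bar A$ and replace $A\wedge\bar A$ by $\bot$, using $\emptyset^\sharp=\bot$ and the congruence).

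The one genuinely modal point is the $\mathsf{tran}$-rule, which reduces to $\Box_j(\Diamond_i A\vee\Delta^\sharp)\wedge\Box_i\bar A\to\Box_j\Delta^\sharp$ for $i\le j$; this needs $\Box_i\bar A\to\Box_j\Box_i\bar A$, which I would derive as the composition of transitivity $\Box_i\bar A\to\Box_i\Box_i\bar A$ (a theorem of $\mathsf{GL}$ obtained from (iii)) with axiom (v) $\Box_i\Box_i\bar A\to\Box_j\Box_i\bar A$. With this in hand every rule is sound and the forward direction follows.

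For completeness ($\Leftarrow$) I would first show that $\mathsf{GLP_{NS}}+\mathsf{cut}$ proves the one-formula sequent of every $\mathsf{GLP}$-theorem, by induction on a Hilbert proof: Boolean tautologies come from completeness of the propositional fragment with $\mathsf{cut}$ and Lemma~\ref{GenAx}; modus ponens is simulated by $\mathsf{cut}$ together with the weakening and $\vee$-splitting rules of Lemma~\ref{aux}; necessitation by the rule $\Gamma\Rightarrow[\Gamma]_i$ of Lemma~\ref{aux}, weakening, and the $\Box$-rule. Each axiom is derived by an explicit proof in which its characteristic rule does the work: (iii) by the $\Box$-rule, whose built-in $\Diamond_i\bar A$ encodes Löb, together with $\Diamond$ and Lemma~\ref{GenAx}; (v) by the $\Box$- and $\Diamond$-rules for $i\le j$; (iv) by the $\mathsf{eucl}$-rule, which reduces the axiom sequent to an instance of Lemma~\ref{GenAx}; and (ii) by $\Box$, $\Diamond$ and propositional reasoning. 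Finally I would prove an \emph{unfolding} lemma, $\mathsf{GLP_{NS}}+\mathsf{cut}\vdash\Gamma^\sharp\Rightarrow\mathsf{GLP_{NS}}+\mathsf{cut}\vdash\Gamma$, by induction on the tree of $\Gamma$, repeatedly applying the reverse rules of Lemma~\ref{aux} ($\vee$-splitting to break the top disjunction, $\Gamma\{\Box_i A\}\Rightarrow\Gamma\{[A]_i\}$ to restore each boxed child, and $\Gamma\{\bot\}\Rightarrow\Gamma\{\emptyset\}$ at empty nodes). Composing the two steps yields $\mathsf{GLP}\vdash\Gamma^\sharp\Rightarrow\mathsf{GLP_{NS}}+\mathsf{cut}\vdash\Gamma$. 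The main obstacle is setting up the two context lemmas precisely—in particular the interaction of the external disjuncts at each nesting level with the distribution step—and spotting the derived principle $\Box_i\bar A\to\Box_j\Box_i\bar A$ behind the $\mathsf{tran}$-rule; once these are in place, the rule-by-rule verification and the unfolding are routine.
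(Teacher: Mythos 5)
Your proposal is correct and follows essentially the same route as the paper: the forward direction is the rule-by-rule soundness check that the paper dismisses as obvious (your context lemmas and the derived principle $\Box_i\overline{A}\rightarrow\Box_j\Box_i\overline{A}$ being exactly the details it suppresses), and the backward direction is the paper's own two-step argument—simulate the Hilbert proof of $\Gamma^\sharp$ as a one-formula sequent in $\mathsf{GLP_{NS}}+\mathsf{cut}$, then recover $\Gamma$ by the last three admissible rules of Lemma \ref{aux} ($\vee$-splitting, $\bot$-removal, and unfolding $\Box_i A$ into $[A]_i$), which is precisely your unfolding lemma.
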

\begin{proof}
The left-to-right part is obvious. To prove the converse, assume there is a proof $\pi$ of $\Gamma^\sharp$ in $\mathsf{GLP}$. By  induction on $\lvert \pi \rvert$, we immediately obtain $\mathsf{GLP_{NS}} + \mathsf{cut} \vdash \Gamma^\sharp$. Applying admissibility of the last three rules of Lemma \ref{aux}, we get $\mathsf{GLP_{NS}} + \mathsf{cut} \vdash \Gamma$.
\end{proof}

\section{Admissible rules}
In the present section we obtain admissibility of auxiliary inference rules, which will be applied in the cut elimination.

Call a finite set of formulas $\mathcal{C}$ \textit{adequate} if it is closed under subformulas and negation. For an adequate set $\mathcal{C}$, by $\mathsf{cut}(\mathcal{C})$ we denote the corresponding rule with the side condition $A \in \mathcal{C}$.
\begin{lem}[Admissibility of structural rules]\label{admissible structural rules}
The rules of weakening, merge and monotonicity   
\[
\AXC{$\Gamma \{ \emptyset \}$} 
\LeftLabel{$\mathsf{weak}$} \UIC{$\Gamma \{ \Delta \}$} \DisplayProof \quad
\AXC{$\Gamma \{ [\Delta]_i , [\Sigma]_i\}$} \LeftLabel{$\mathsf{merge}$} 
\UIC{$\Gamma \{ [\Delta, \Sigma]_i \}$} 
\DisplayProof \quad 
\AXC{$\Gamma \{ [ \Delta]_i\}$} \LeftLabel{$\mathsf{mon}$} \RightLabel{$(i\leqslant j)$} \UIC{$\Gamma \{ [\Delta]_j \}$}
\DisplayProof\]
are admissible for $\mathsf{GLP_{NS}} + \mathsf{cut}(\mathcal{C}) $.
\end{lem}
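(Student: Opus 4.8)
The plan is to prove the admissibility of all three structural rules simultaneously by induction on the height of the proof of the premise, treating each rule by inspecting the last inference of that proof. This is the standard hereditary-weakening style argument: I assume the premise sequent is provable via some proof $\pi$ and construct a proof of the conclusion sequent of height at most $\lvert\pi\rvert$, so that the inductive hypothesis is available not only for the same rule but for all three rules on smaller proofs.

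First I would handle weakening. The base case is when $\pi$ is an initial sequent; since $\Gamma\{\emptyset\}$ being initial forces $\Gamma\{\Delta\}$ to contain the same $p,\overline{p}$ or $\top$ witness, the conclusion is again initial. For the inductive step I push the weakening material $\Delta$ upward through the last rule. The delicate point is that the hole $\{\:\}$ in $\Gamma\{\:\}$ may sit inside or outside the principal position of the last inference; in the generic case the two positions are independent, so the last rule still applies after weakening and I invoke the inductive hypothesis on its premise(s). The genuinely interesting subcase is the modal rule $\mathsf{\Box}$ (and likewise $\mathsf{\Diamond}$, $\mathsf{tran}$, $\mathsf{eucl}$): when I weaken inside the bracket $[\ldots]_i$ introduced by such a rule, I must check that the rule's premise, with the extra $\Delta$ inserted at the appropriate node, still instantiates the rule. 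For $\mathsf{\Box}$ the premise is $\Gamma\{[A,\Diamond_i\overline{A}]_i\}$ and the weakened formulas simply join the multiset at that node, which the rule tolerates, so the step goes through.

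Next I treat monotonicity, which I expect to be the main obstacle. Weakening's height-preserving character is needed here, because raising the index of a bracket from $i$ to $j$ interacts nontrivially with the side conditions $i\leqslant j$ and $i<j$ attached to $\mathsf{\Diamond}$, $\mathsf{tran}$, $\mathsf{eucl}$. Again I argue by induction on $\pi$ and by cases on its last rule; when the affected bracket $[\Delta]_i$ is the one introduced by the last modal inference, I must verify that raising its label to $j$ preserves the side condition of that inference. Raising a bracket index only relaxes the inequalities $i\leqslant j$ and tightens $i<j$, and here the critical check is the $\mathsf{eucl}$ rule, whose strict condition $i<j$ could fail; I would confirm that the relevant comparison is between the modality index of the $\Diamond$-formula and the bracket index, and that monotonicity is applied only to the outer bracket index, so the condition is maintained. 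Where the monotonicity target lies strictly below the introduced bracket, the rule commutes and the inductive hypothesis finishes the case.

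Finally I handle merge, combining two sibling brackets $[\Delta]_i,[\Sigma]_i$ with the same index into $[\Delta,\Sigma]_i$. I expect this to reduce to the previous two rules together with an induction on $\pi$: after pushing the merge through the last inference, the base and generic cases are routine, and the only subtlety is again when the last rule introduced one of the two brackets being merged, in which case I use the induction hypothesis on the premise and then reapply the rule to the merged node, checking that the auxiliary $\Diamond_i\overline{A}$ or $A$ data survives the union of multisets. Throughout, the fact that all three rules are proved together in a single induction is what lets me appeal, inside the treatment of one rule, to the admissibility of the others on proofs of strictly smaller height, and this is the structural feature that makes the argument close.
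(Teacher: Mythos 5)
Your proposal is correct and is exactly what the paper's one-line proof (``Simple transformations of proofs'') abbreviates: a height-preserving induction on the proof of the premise, commuting each structural rule past the last inference, with the only nontrivial checks being the side conditions $i\leqslant j$ and $i<j$ of the rules $\mathsf{\Diamond}$, $\mathsf{tran}$, $\mathsf{eucl}$, which are preserved since $\mathsf{mon}$ only raises bracket labels. The single point to add is that the case analysis must also include $\mathsf{cut}(\mathcal{C})$ as a possible last rule, but all three structural transformations commute with it trivially, since they preserve the cut formula and its principal position.
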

\begin{proof}
Simple transformations of proofs.
\end{proof}
Recall that an inference rule is called \textit{invertible} (for a given proof system) if, for every instance of the rule, all premises are provable whenever the conclusion is provable. 

\begin{lem}[Invertibility]\label{invertability 1}
For $\mathsf{GLP_{NS}} + \mathsf{cut}(\mathcal{C})$, all rules of the system and the rule
\[\AXC{$\Gamma \{ \emptyset \}$}
\LeftLabel{$\bot$}
\UIC{$\Gamma \{ \bot \}$}
\DisplayProof\]
are invertible.
\end{lem}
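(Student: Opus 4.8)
The plan is to prove invertibility of each rule of $\mathsf{GLP_{NS}}$ (together with the $\bot$-rule) by exhibiting, for each rule, a uniform way to derive the premise from the conclusion. Since the calculus contains admissible cut (restricted to the adequate set $\mathcal{C}$), the cleanest approach is semantic-to-syntactic: to invert a rule with conclusion $\Gamma\{C\}$ and premise $\Gamma\{P\}$, I would cut the provable conclusion against the generalised axiom $\Gamma\{\overline{C}, P\}$ supplied by Lemma~\ref{GenAx}. For this to stay inside $\mathsf{GLP_{NS}}+\mathsf{cut}(\mathcal{C})$ I must check that the cut formula lies in $\mathcal{C}$, which holds because $\mathcal{C}$ is closed under subformulas and negation and the principal formula of each rule is a subformula of something already present in $\Gamma\{C\}$.

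Concretely, I would proceed rule by rule. For the $\vee$-rule the premise $\Gamma\{A,B\}$ already follows from the conclusion $\Gamma\{A\vee B\}$ by the merge/weakening machinery of Lemma~\ref{admissible structural rules} together with the third rule of Lemma~\ref{aux}, so nothing new is needed there. For the $\wedge$-rule, invertibility means that from $\Gamma\{A\wedge B\}$ one recovers both $\Gamma\{A\}$ and $\Gamma\{B\}$; here I cut $\Gamma\{A\wedge B\}$ against $\Gamma\{\overline{A}\vee\overline{B}, A\}$ (respectively with $B$), which is an instance of Lemma~\ref{GenAx} after one $\vee$-step, and the cut formula $A\wedge B\in\mathcal{C}$. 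The modal rules $\mathsf{\Diamond}$, $\mathsf{tran}$, $\mathsf{eucl}$ and the box rule $\mathsf{\Box}$ are handled the same way: from the displayed conclusion I cut against the appropriate generalised axiom built from the principal formula and its negation, then simplify using weakening, merge and monotonicity. The $\bot$-rule is immediate, since $\Gamma\{\bot\}$ yields $\Gamma\{\emptyset\}$ by the fourth rule of Lemma~\ref{aux}.

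I expect the main obstacle to be the modal rules, and in particular keeping track of the deep (nested) context. Unlike the propositional connectives, the principal formulas of $\mathsf{\Diamond}$, $\mathsf{tran}$ and $\mathsf{eucl}$ sit inside boxed components $[\cdot]_j$, so the generalised axiom I cut against must reproduce the correct nesting structure, and I must verify that after the cut the redundant copies of $\Diamond_i A$ can indeed be absorbed by merge and monotonicity to land exactly on the stated premise. The $\mathsf{eucl}$ rule is the delicate one: its conclusion $\Gamma\{[\Diamond_i A,\Delta]_j\}$ has strictly fewer occurrences of $\Diamond_i A$ at the outer level than its premise, so inverting it requires reintroducing an outer $\Diamond_i A$, which I would obtain from the axiom (iv)-style reasoning encoded by the calculus together with weakening. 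Throughout, the argument is uniform and each step reduces to an application of Lemma~\ref{GenAx}, Lemma~\ref{aux} and Lemma~\ref{admissible structural rules}, so no genuinely new construction is required beyond careful bookkeeping of contexts.
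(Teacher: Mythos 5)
Your strategy has a genuine gap: it relies on cuts that the system $\mathsf{GLP_{NS}} + \mathsf{cut}(\mathcal{C})$ does not permit. The set $\mathcal{C}$ is a \emph{fixed} adequate set, and $\mathsf{cut}(\mathcal{C})$ only allows cut formulas belonging to $\mathcal{C}$. Your justification --- that the principal formula of the rule being inverted ``is a subformula of something already present in $\Gamma\{C\}$'' and hence lies in $\mathcal{C}$ --- confuses two different closure properties: $\mathcal{C}$ is closed under subformulas of \emph{its own members}, but there is no reason whatsoever that the formulas occurring in a provable sequent belong to $\mathcal{C}$. Invertibility is claimed (and needed) for \emph{every} instance of every rule, including instances whose principal formula lies outside $\mathcal{C}$. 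This is not a corner case: in the paper's own use of this lemma (Lemma \ref{reduction lemma}, Case 3), the rule $\wedge$ is inverted on the cut formula $A = B \wedge C$ with $\mathcal{C} = \mathcal{C}_A$, and $\mathcal{C}_A$ by definition contains only the \emph{proper} subformulas of $A$ and their negations, so $A \wedge B \notin \mathcal{C}_A$; similarly, in Lemma \ref{modal cut} the rule $\Box$ is inverted on $\Box_i A$ where only $A$, not $\Box_i A$, is guaranteed to be in $\mathcal{C}$. Your cut-against-a-generalised-axiom trick is exactly unavailable in the cases for which the lemma exists. (A related slip: you invoke the third and fourth rules of Lemma \ref{aux}, but that lemma is proved for $\mathsf{GLP_{NS}} + \mathsf{cut}$ with unrestricted cut, so importing it here smuggles in the same illegal cuts.)

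The paper's proof avoids this entirely: it is a standard induction on the height of the given proof, transforming that proof by permuting the rule to be inverted upward past the other rules; no new cuts are ever introduced, so the restriction to $\mathcal{C}$ is never violated. This is the essential reason the proof-transformation argument, and not the cut-based shortcut, is the right tool inside a cut-elimination development. One further remark: for the rules $\Diamond$, $\mathsf{tran}$ and $\mathsf{eucl}$ you do not need any of this machinery --- their premises differ from their conclusions only by \emph{additional} material (the conclusion's displayed formulas all reappear in the premise), so their invertibility is an immediate consequence of admissible weakening (Lemma \ref{admissible structural rules}); in particular $\mathsf{eucl}$, which you single out as delicate, is the easiest case, since its premise is literally a weakening of its conclusion. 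The only rules requiring real work are $\wedge$ and $\Box$, and those are precisely the ones your method cannot handle within the restricted system.
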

\begin{proof}
Standard induction on the heights of proofs.
\end{proof}
We stress that the contraction rule is also admissible for $\mathsf{GLP_{NS}} + \mathsf{cut}(\mathcal{C})$, but we use this rule only in the following weak form:
\begin{lem}
The rule
\[\AXC{$\Gamma \{ p,p \}$}
\UIC{$\Gamma \{ p \}$}
\DisplayProof\]
is admissible for $\mathsf{GLP_{NS}}$.
\end{lem}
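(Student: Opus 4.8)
The plan is to prove admissibility of the weak contraction rule for atoms by induction on the height of the given proof of $\Gamma\{p,p\}$, analysing the last rule applied. The key observation is that since $p$ is an atom, the two displayed occurrences can never be principal formulas of any inference rule (all rules have principal formulas that are either compound formulas $A\wedge B$, $A\vee B$, $\Box_i A$, $\Diamond_i A$, or the box $[\cdots]_i$ introduced by the modal rules, or the $\top$ and $p,\overline p$ of initial sequents). Hence in every inductive case the two occurrences of $p$ are passive and we may simply push the contraction above the last rule into the premises, apply the induction hypothesis, and reapply the same rule.

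First I would set up the induction carefully. The context $\Gamma\{p,p\}$ means the two occurrences sit in a common node of the underlying tree, so I would write the conclusion as $\Gamma\{p,p\}$ with both holes filled by $p$ in one principal position. For the base case, if $\Gamma\{p,p\}$ is an initial sequent, I check that deleting one copy of $p$ still leaves an initial sequent: if the sequent is initial because it contains $\top$, that $\top$ is untouched by the contraction, so $\Gamma\{p\}$ is still initial; if it is initial because it contains some complementary pair $q,\overline q$, that pair is likewise preserved (the two contracted occurrences are both $p$, not a complementary pair), so again $\Gamma\{p\}$ is initial. Thus the height-$0$ case is immediate.

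For the induction step, I would take the last rule $r$ in the proof of $\Gamma\{p,p\}$. Since neither displayed $p$ can be principal, the principal position of $r$ is disjoint from the two contracted occurrences, so both occurrences appear (passively) in every premise, and the contraction commutes with $r$. Concretely, I rewrite each premise as $\Gamma'\{p,p\}$ for the appropriate context $\Gamma'$ obtained by locating the principal position of $r$ relative to the hole, apply the induction hypothesis to obtain a proof of $\Gamma'\{p\}$ of no greater height, and then reapply $r$ to conclude $\Gamma\{p\}$. The modal rules deserve a word: in $\mathsf{\Box}$, $\mathsf{\Diamond}$, $\mathsf{tran}$ and $\mathsf{eucl}$ the principal formulas are modal or bracketed structures and the contracted atoms lie in some node of the tree untouched by the structural manipulation of the rule, so the same commutation applies verbatim and the side conditions $i\leqslant j$ or $i<j$ are inherited unchanged.

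The main obstacle, and the only point requiring care, is the bookkeeping that guarantees the two occurrences of $p$ really do remain in the premises of each rule and never coincide with a principal position. This is a matter of the precise definition of contexts and filling of holes from \cite{Bru10}: one must verify that for each rule the principal position and the two-hole context carrying the contracted pair can be chosen disjoint, so that the premise is correctly expressible in the form $\Gamma'\{p,p\}$. Once this is granted the argument is entirely mechanical, which is why it suffices to record it as a routine induction on proof height.
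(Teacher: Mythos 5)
Your proposal is correct and follows exactly the route the paper intends: its proof is the one-line ``simple induction on the height of a proof of $\Gamma\{p,p\}$'', which is precisely your induction, with the key observation (atoms are never principal in any rule of $\mathsf{GLP_{NS}}$, so the contracted pair persists into every premise and the contraction commutes with the last rule) spelled out. The only point worth polishing is the base case: if the initiality witness is a pair $p,\overline{p}$ involving the deleted occurrence, the pair is not literally ``preserved'', but the surviving displayed $p$ in the same node takes its place, so $\Gamma\{p\}$ is still initial.
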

\begin{proof}
Simple induction on the height of a proof of $\Gamma \{ p,p \}$.
\end{proof}
In the corresponding tree of a sequent context $\Gamma \{ \:\} \{ \:\}^n$, consider a path of the form $$a_0 \leftarrow_{j_1} a_1 \leftarrow_{j_2} a_2 \leftarrow_{j_3} \dotso \leftarrow_{j_k} a_k \to_{j_{k+1}} a_{k+1} \rightarrow_{j_{k+2}} \dotso  \to_{j_{k+l}} a_{k+l} , $$ where edges are directed away from the root. A path of this form is called an \textit{$i$-path} if $i < j_1 , \dotso , j_k $ and $i \leqslant j_{k+1} , \dotso , j_{k+l}$. We define a \textit{strict $i$-path} by letting $k=0$ in the previous definition.

Define the rule $\Box\text{-}\mathsf{cut}$ as
\[  
\AXC{$\Gamma \{\Diamond_i \overline{A} \} \{ \Diamond_i \overline{A}\}^n$}
\AXC{$\Gamma \{\Box_i A \} \{ \emptyset\}^n$} 
\LeftLabel{$\Box\text{-}\mathsf{cut}$}
\RightLabel{ ,}
\BIC{$\Gamma \{\emptyset\} \{ \emptyset\}^n$}
\DisplayProof
 \]
where, in the corresponding tree of $\Gamma \{ \:\} \{ \:\}^n$, there are $i$-paths from the node of the first hole to each node of the other holes. Define the principal position of $\Box\text{-}\mathsf{cut}$ as $\Gamma \{ \:\} \{ \emptyset\}^n$.

In the proof of the cut elimination, we need trace occurrences of $\Diamond_i \overline{A}$ from premisses of the rule $\Box$ throughout formal proofs:
\[\AXC{$\Gamma \{[A, \Diamond_i \overline{A}]_i \}$} 
\LeftLabel{$\mathsf{\Box}$}
\RightLabel{ .}
\UIC{$\Gamma \{\Box_i A \}$}
\DisplayProof
\]
To facilitate this treatment, we use \textit{annotated formulas} of the form $\diamondtimes_i B$ where $B$ is a ordinary formula. We also consider \textit{annotated proofs} obtained by allowing annotated formulas in sequents and annotated variants of the rules $\mathsf{\Diamond}$ and $\mathsf{tran}$:
\begin{gather*}
\AXC{$\Gamma \{\diamondtimes_i A , [ A, \Delta ]_j\}$}
\LeftLabel{$\mathsf{\diamondtimes}$}
\RightLabel{$(i\leqslant j)$}
\UIC{$\Gamma \{\diamondtimes_i A, [ \Delta ]_j\}$}
\DisplayProof \:, \quad
\AXC{$\Gamma \{\diamondtimes_i A, [ \diamondtimes_i A, \Delta ]_j\}$}
\LeftLabel{$\mathsf{tran}^\prime$}
\RightLabel{$(i \leqslant j)$}
\UIC{$\Gamma \{ \diamondtimes_i A, [ \Delta ]_j\}$}
\DisplayProof \:.
\end{gather*}
Note that we don't allow the annotated variant for the rule $\mathsf{eucl}$. 

Let the rule $\boxplus\text{-}\mathsf{cut}$ be
\[  
\AXC{$\Gamma \{\diamondtimes_i \overline{A} \} \{ \diamondtimes_i \overline{A}\}^n$}
\AXC{$\Gamma \{\Box_i A \} \{ \emptyset\}^n$} 
\LeftLabel{$\boxplus\text{-}\mathsf{cut}$}
\RightLabel{ ,}
\BIC{$\Gamma \{\emptyset\} \{ \emptyset\}^n$}
\DisplayProof
 \]
where, in the corresponding tree of $\Gamma \{ \:\} \{ \:\}^n$, there are strict $i$-paths from the node of the first hole to the every node of the others. Define the principal position of $\boxplus\text{-}\mathsf{cut}$ as $\Gamma\{ \:\} \{ \emptyset\}^n$.





For an adequate set $\mathcal{C}$, by $\Box\text{-}\mathsf{cut}(\mathcal{C})$ and $\boxplus\text{-}\mathsf{cut}(\mathcal{C})$ we denote the corresponding rules with the side condition $A \in \mathcal{C}$.

Let us define the rule
\[\AXC{$\Gamma \{[\Delta]_i \} \{ \emptyset\} $} 
\LeftLabel{$\mathsf{str}$}
\UIC{$\Gamma \{ \emptyset \} \{[\Delta]_i \}$}
\DisplayProof\]
with the proviso that there is an $i$-path from the node of the first hole to the node of the second hole in the corresponding tree of $\Gamma \{ \:\} \{ \:\}$.
\begin{lem}\label{stretch}
The rule $\mathsf{str}$ is admissible for $\mathsf{GLP_{NS}} + \mathsf{cut}(\mathcal{C}) $.
\end{lem}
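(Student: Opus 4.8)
The plan is to prove the statement in two stages: first reduce an arbitrary $i$-path to single edges, and then dispose of each single edge by induction on the height of the given derivation. For the first stage I would induct on the length $k+l$ of the $i$-path witnessing the instance of $\mathsf{str}$. If the length is $0$, the two holes sit at the same node, so $\Gamma\{[\Delta]_i\}\{\emptyset\}$ and $\Gamma\{\emptyset\}\{[\Delta]_i\}$ denote the very same sequent and there is nothing to prove. If the path has positive length, I peel off its first edge. When the path begins with an upward edge $a_0 \leftarrow_{j_1} a_1$ (so $i<j_1$), I first move $[\Delta]_i$ out of the $j_1$-box to the node $a_1$; the remainder $a_1 \leftarrow \dots \to a_{k+l}$ is again an $i$-path, now shorter, and the induction hypothesis finishes the job. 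When the path begins with a downward edge $a_0 \to_{j_1} a_1$ (so $i \leq j_1$), I instead push $[\Delta]_i$ into the $j_1$-box and invoke the induction hypothesis on the shorter strict $i$-path from $a_1$. Thus everything reduces to two elementary moves: an \emph{outward} move, deriving $\Gamma\{[\Delta]_i,[\Sigma]_j\}$ from $\Gamma\{[\,[\Delta]_i,\Sigma\,]_j\}$ when $i<j$, and an \emph{inward} move, deriving $\Gamma\{[\,[\Delta]_i,\Sigma\,]_j\}$ from $\Gamma\{[\Delta]_i,[\Sigma]_j\}$ when $i\leq j$.

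For the second stage I would prove each elementary move admissible by induction on the height of the derivation of its premise, splitting on the last rule $r$. Since the bracket structures $[\Delta]_i$ and $[\Sigma]_j$ both occur in the conclusion, neither of them can have just been produced by the rule $\mathsf{\Box}$ (which leaves a formula, not a structure); hence both are passive in $r$, and $r$ acts either strictly inside $\Delta$, or inside $\Sigma$, or elsewhere in the context, or it transfers a formula across the edge joining the parent node to the traversed $j$-box. All but the last kind commute with the move directly: apply the induction hypothesis to the premise(s) of $r$ and reapply $r$. The transfer case is the representative one. For instance, if $r=\mathsf{\Diamond}$ acts on the traversed box, one applies the induction hypothesis to relocate $[\Delta]_i$ and then reapplies $\mathsf{\Diamond}$, observing that $[\Delta]_i$ has merely joined the passive contents of that box, so the side condition of $\mathsf{\Diamond}$ is untouched; the cases $r=\mathsf{tran}$ and $r=\mathsf{eucl}$ are analogous, with $[\Delta]_i$ sliding past the displayed diamond. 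Throughout I would freely use the already-established admissibility of $\mathsf{weak}$, $\mathsf{merge}$ and $\mathsf{mon}$ (Lemma~\ref{admissible structural rules}) and invertibility (Lemma~\ref{invertability 1}), all of which keep us inside $\mathsf{GLP_{NS}} + \mathsf{cut}(\mathcal{C})$, so that no new cut formulas are introduced.

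The main obstacle I anticipate is the genuine consumption of the index side conditions, and in particular the outward move. Semantically this move rests on axiom (iv), $\Diamond_i A \to \Box_j \Diamond_i A$ for $i<j$ — equivalently on the euclidean rule $\mathsf{eucl}$, whose side condition is exactly $i<j$ — whereas the inward move rests on the transitivity of $\Box_i$ together with monotonicity (axiom (v)), matching the non-strict $i \leq j$. The delicate point is that the outward move is simply \emph{unsound} when $i=j$: already in $\mathsf{GL}$ one has $\not\vdash \Box(\Box C \vee D)\to \Box C \vee \Box D$, as witnessed by a finite transitive irreflexive frame in which the root sees one leaf validating $\Box C$, one world validating $D$, and one world refuting $C$. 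Consequently the induction cannot merely pass its hypothesis along every branch; some case must discharge the strict inequality through an actual appeal to the euclidean structure. Isolating exactly this case, and verifying that after relocating $[\Delta]_i$ the reapplied modal rule still meets its own side condition, is where the real work concentrates; the remaining cases are routine commutations, and the first-stage reduction then assembles them along the whole $i$-path.
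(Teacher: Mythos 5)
Your overall plan coincides with the paper's proof: the paper also decomposes $\mathsf{str}$ along the $i$-path into exactly your two elementary moves (inward with $i \leqslant j$, outward with $i < j$) and then asserts that these one-step moves are admissible ``from definitions of rules $\mathsf{tran}$ and $\mathsf{eucl}$''. Your stage-1 induction on the length of the path is correct, as is your diagnosis that the outward move is unsound for $i = j$ and must consume the strict side condition of $\mathsf{eucl}$.

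The gap is in stage 2, in precisely the case your classification leaves out. You split on $r$ acting strictly inside $\Delta$, inside $\Sigma$, elsewhere in the context, or transferring a formula across the edge joining the parent node to the traversed $j$-box, and you claim all but the last kind commute directly, the last being harmless because $[\Delta]_i$ is passive content of that box. But the crucial case is none of these: it is when $r$ transfers a formula across the edge bounding the moved box $[\Delta]_i$ itself --- for instance a $\Diamond$ or $\mathsf{tran}$ whose principal formula $\Diamond_m A$ ($m \leqslant i$) sits at the old parent of $[\Delta]_i$ while its auxiliary formula sits inside $[\Delta]_i$, or an $\mathsf{eucl}$ ($m < i$) working in the opposite direction. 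For these rules, ``apply the induction hypothesis and reapply $r$'' fails outright: after relocation, $\Diamond_m A$ is no longer at the node adjacent to $[\Delta]_i$, so the very shape of $r$, not merely its side condition, is violated. The repair --- which is the actual content of the lemma, and what the paper's appeal to the definitions of $\mathsf{tran}$ and $\mathsf{eucl}$ amounts to --- is to weaken a copy of $\Diamond_m A$ onto the new node adjacent to $[\Delta]_i$, perform the rule there, and then dispose of that copy across the $j$-edge: by $\mathsf{tran}$ for the inward move (sound since $m \leqslant i \leqslant j$ gives $m \leqslant j$) and by $\mathsf{eucl}$ for the outward move (sound since $m \leqslant i < j$ gives $m < j$). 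Your closing paragraph concedes that some case must ``discharge the strict inequality through an actual appeal to the euclidean structure'' and that isolating it is ``where the real work concentrates'' --- in other words, you have not isolated it; and the case you do single out as representative (rules acting on the traversed $j$-box) is in fact one of the routine commutations. So stage 2, as written, both misclassifies the critical case as routine and leaves its treatment unexecuted.
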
 
\begin{proof}
The rule $\mathsf{str}$ moves a boxed sequent $[\Delta]_i$ inside a sequent from one place to another. The one-step moving rules
\begin{gather*}
\AXC{$\Gamma \{ [\Delta]_i, [\Sigma ]_j\}$}
\RightLabel{$(i \leqslant j)$}
\UIC{$\Gamma \{ [[\Delta ]_i, \Sigma ]_j\}$}
\DisplayProof \quad 
\AXC{$\Gamma \{ [[\Delta]_i, \Sigma ]_j\}$}
\RightLabel{$(i< j)$}
\UIC{$\Gamma \{[\Delta]_i, [\Sigma ]_j\}$}
\DisplayProof 
\end{gather*}
are admissible from definitions of rules $\mathsf{tran}$ and $\mathsf{eucl}$. Hence, the rule $\mathsf{str}$ is admissible.
\end{proof}

Let us define the rule
\[\AXC{$\Gamma \{[\Delta]_i \} \{ \emptyset\} $} 
\LeftLabel{$\mathsf{upstr}$}
\UIC{$\Gamma \{ \emptyset \} \{[\Delta]_i \}$}
\DisplayProof\]
with the proviso that there is a strict $i$-path from the node of the first hole to the node of the second hole in the corresponding tree of $\Gamma \{ \:\} \{ \:\}$.

In the corresponding tree of a sequent $\Gamma$, the \emph{depth} of a node is the length of the path from the node to the root. The depth of the root is $0$.
By $\boxplus\text{-}\mathsf{cut}_d(\mathcal{C})$ we denote the rule $\boxplus\text{-}\mathsf{cut}(\mathcal{C})$
with the requirement that the depth the node of the hole $\{ \: \}$ in the principal position of $\boxplus\text{-}\mathsf{cut}(\mathcal{C})$ is greater or equal than $d$.

\begin{lem}\label{upstretch}
In $\mathsf{GLP_{NS}} + \mathsf{cut}(\mathcal{C}) + \boxplus\text{-}\mathsf{cut}_d(\mathcal{C}) $, if there is an annotated proof for the premise of $\mathsf{upstr}$, then there is an annotated proof for the conclusion of $\mathsf{upstr}$.
\end{lem}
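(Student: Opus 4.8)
The plan is to prove Lemma~\ref{upstretch} by induction on the height of the given annotated proof $\pi$ of the premise $\Gamma \{[\Delta]_i\}\{\emptyset\}$, proceeding by case analysis on the last rule applied in $\pi$. The target is to move the boxed sequent $[\Delta]_i$ from the node of the first hole to the node of the second hole, where the two holes are connected by a \emph{strict} $i$-path. The essential difference between $\mathsf{str}$ (Lemma~\ref{stretch}) and $\mathsf{upstr}$ is that the strict $i$-path forbids any downward ($\leftarrow_{j}$) edges with $i < j$ before reaching the apex, so the whole path points away from the root from the first hole onward; this is exactly the situation in which the annotated rules $\mathsf{\diamondtimes}$ and $\mathsf{tran}'$ can relocate annotated diamonds, and it is why only the \emph{strict} version is stated for the annotated calculus (recall the $\mathsf{eucl}$ rule, which requires $i<j$ and licenses downward movement, has no annotated variant). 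First I would reduce the problem to a single ``up one step'' moving rule along a strict $i$-path — moving $[\Delta]_i$ across one edge $a_k \to_{j} a_{k+1}$ with $i \leqslant j$, i.e.\ from $\Gamma\{[\Delta]_i,[\Sigma]_j\}$ to $\Gamma\{[[\,\Delta]_i,\Sigma]_j\}$ — and then iterate it along the path, just as in the proof of Lemma~\ref{stretch} but using only the upward (first) one-step rule.

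For the core induction I would distinguish whether the last rule of $\pi$ acts \emph{inside}, \emph{on}, or \emph{away from} the material being moved. If the last rule is applied entirely within $\Delta$, within the surrounding context $\Gamma$ away from both holes, or at the second (empty) hole, the rule commutes with the relocation: I apply the induction hypothesis to the premise(s) and reattach the same last rule, adjusting the context so that $[\Delta]_i$ now sits at the target position. The delicate cases are those where the last rule interacts with the top node of the strict $i$-path, namely the modal rules that introduce or propagate annotated diamonds $\diamondtimes_i$ across the relevant edge. Here the side conditions $i \leqslant j$ on $\mathsf{\diamondtimes}$ and $\mathsf{tran}'$ line up precisely with the defining inequality of a strict $i$-path, so an occurrence of $\diamondtimes_i \overline{A}$ sitting at the first hole can be threaded up through the boxed compartment together with $[\Delta]_i$, and the resulting derivation stays within $\mathsf{GLP_{NS}} + \mathsf{cut}(\mathcal{C}) + \boxplus\text{-}\mathsf{cut}_d(\mathcal{C})$.

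The main obstacle I anticipate is controlling the depth parameter $d$ and the annotation discipline simultaneously through the induction. Because $\mathsf{upstr}$ relocates $[\Delta]_i$ strictly upward (away from the root), the depth of the node carrying the moved material increases, so when the last rule of $\pi$ is an instance of $\boxplus\text{-}\mathsf{cut}_d(\mathcal{C})$ I must verify that the depth of its principal position in the transformed proof still meets the $\geqslant d$ requirement — which it does, since upward relocation only deepens nodes — and that I never need the forbidden annotated $\mathsf{eucl}$. I would therefore carry the invariant that every annotated diamond $\diamondtimes_i$ encountered along the path has its annotation index matching the path's distinguished modality $i$, so that each commutation step is justified by $\mathsf{\diamondtimes}$ or $\mathsf{tran}'$ alone. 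Granting the admissibility of the single upward one-step moving rule for annotated proofs (established exactly as the first half of Lemma~\ref{stretch}, using the $\mathsf{tran}'$ direction), the iteration along the strict $i$-path yields the annotated proof of the conclusion of $\mathsf{upstr}$, completing the argument.
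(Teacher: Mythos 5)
Your proposal matches the paper's own proof in both structure and key idea: the paper likewise reduces $\mathsf{upstr}$ to the single one-step deepening rule $\Gamma \{ [\Delta]_i , [\Sigma]_j \} \,/\, \Gamma \{ [[\Delta]_i , \Sigma]_j \}$ with $i \leqslant j$ (admissible by the same $\mathsf{tran}$/$\mathsf{eucl}$-style proof transformation used for Lemma \ref{stretch}), iterates it along the strict $i$-path, and rests on exactly your observation that this relocation can only make the principal positions of $\boxplus\text{-}\mathsf{cut}(\mathcal{C})$ applications deeper, so the depth-$d$ side condition is preserved. Your explicit remarks on the annotation discipline (that no annotated $\mathsf{eucl}$ is ever needed because every move is away from the root) spell out what the paper leaves implicit, but the route is the same.
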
 
\begin{proof}
The rule $\mathsf{upstr}$ moves a boxed sequent $[\Delta]_i$ inside a sequent from one place to deeper position. The one-step moving rule
\begin{gather*}
\AXC{$\Gamma \{ [\Delta]_i, [\Sigma ]_j\}$}
\LeftLabel{$\rho$}
\RightLabel{$(i \leqslant j)$}
\UIC{$\Gamma \{ [[\Delta ]_i, \Sigma ]_j\}$}
\DisplayProof 
\end{gather*}
is admissible for $\mathsf{GLP_{NS}} + \mathsf{cut}(\mathcal{C}) + \boxplus\text{-}\mathsf{cut}(\mathcal{C})$ from the definition of rules $\mathsf{tran}$ and $\mathsf{eucl}$. Moreover, all applications of the rule $\boxplus\text{-}\mathsf{cut}(\mathcal{C})$ in the proof of the conclusion of $\rho$ can become only deeper. Hence, the one-step moving rule and the rule $\mathsf{upstr}$ are also admissible for $\mathsf{GLP_{NS}} + \mathsf{cut}(\mathcal{C}) + \boxplus\text{-}\mathsf{cut}_d(\mathcal{C}) $.
\end{proof}

\begin{lem} \label{ann lemma}
For $\mathsf{GLP_{NS}} + \mathsf{cut}(\mathcal{C})  +\boxplus\text{-}\mathsf{cut}_d(\mathcal{C})$, the rule  
\[
\AXC{$\Gamma \{ [\Diamond_i A , \Delta]_i \}$}
\UIC{$\Gamma \{ [\diamondtimes_i A , \Delta]_i \}$}
\DisplayProof
\]
is admissible with respect to annotated proofs.
\end{lem}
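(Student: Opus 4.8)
The plan is to argue by induction on the height of the given annotated proof of $\Gamma \{ [\Diamond_i A , \Delta]_i \}$, tracing the distinguished occurrence of $\Diamond_i A$ (the one displayed at the top level of the box $[\cdots]_i$) upward through the proof and replacing it, together with every occurrence into which it is copied, by $\diamondtimes_i A$. Since an application of $\mathsf{tran}$ or $\mathsf{eucl}$ may duplicate the traced formula, the occurrence being annotated is really a whole family of occurrences; I would therefore strengthen the induction hypothesis so as to allow the simultaneous annotation of any finite set of occurrences of $\Diamond_i A$ arising by such tracing. For each rule at the root one inspects whether it acts on a traced occurrence: if it does not, the family is untouched, and one simply applies the induction hypothesis to the premises and reapplies the same rule. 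Note that this covers the $\mathsf{cut}(\mathcal{C})$ and $\boxplus\text{-}\mathsf{cut}_d(\mathcal{C})$ cases, since the cut formulas never survive into the conclusion, so a traced diamond is always in the context of a cut and the tree shape (hence the strict $i$-path structure and all depths) is preserved.

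The genuinely active cases are those in which a traced occurrence is principal or auxiliary. If it is the trigger of $\mathsf{\Diamond}$ or $\mathsf{tran}$, one replaces the rule by its annotated variant $\mathsf{\diamondtimes}$ or $\mathsf{tran}^\prime$, propagating the annotation to the copy pushed into the inner box; these variants exist precisely so that this replacement goes through. The decisive point concerns $\mathsf{eucl}$, for which there is deliberately no annotated variant. Here I would first observe that the distinguished occurrence itself can never be $\mathsf{eucl}$-principal: that rule requires its principal diamond $\Diamond_i A$ to sit inside a box labelled $j$ with $i<j$, whereas the distinguished occurrence sits inside the box labelled exactly $i$, so the strict inequality fails. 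Consequently, as long as a traced occurrence stays in a box labelled $i$, no $\mathsf{eucl}$ can fire on it and the annotation propagates freely.

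The main obstacle is thus the interaction between $\mathsf{tran}$ and $\mathsf{eucl}$: an application of $\mathsf{tran}$ with $i<j$ pushes a traced copy into a box labelled $j>i$, and such a deep copy \emph{does} satisfy the side condition of $\mathsf{eucl}$, so the original proof may legitimately apply $\mathsf{eucl}$ to it — a step I cannot replay with an annotated rule. The observation that rescues the argument is that the shallower copy which this $\mathsf{eucl}$ would create is already present. Indeed, the deep copy was itself produced by a $\mathsf{tran}$ whose trigger is a traced occurrence sitting at exactly the node at which $\mathsf{eucl}$ wants to place its outgoing copy, and that trigger persists upward, because none of the rules erase a diamond when read from conclusion to premise. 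Hence such an $\mathsf{eucl}$ only reintroduces a copy of a formula that already occurs at the same node, so it can be bypassed using the admissible weakening of Lemma~\ref{admissible structural rules} together with the contraction of the two coinciding annotated diamonds, in place of an annotated $\mathsf{eucl}$.

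To keep this reorganisation compatible with the ambient system $\mathsf{GLP_{NS}} + \mathsf{cut}(\mathcal{C}) + \boxplus\text{-}\mathsf{cut}_d(\mathcal{C})$, I would route the relocation of the inner box along the relevant strict $i$-path through the admissible rule $\mathsf{upstr}$ of Lemma~\ref{upstretch}, whose proof guarantees that all applications of $\boxplus\text{-}\mathsf{cut}(\mathcal{C})$ can only become deeper, so that the depth side condition of $\boxplus\text{-}\mathsf{cut}_d(\mathcal{C})$ is maintained. I expect the hardest part of the write-up to be exactly the bookkeeping in this $\mathsf{tran}$/$\mathsf{eucl}$ case: one must verify that every $\mathsf{eucl}$ applied to an annotated copy really is redundant in the above sense, which amounts to checking that the traced family, together with the strict $i$-path geometry, behaves precisely as the strict paths underlying $\boxplus\text{-}\mathsf{cut}$ demand.
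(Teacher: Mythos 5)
Your proposal is correct and follows essentially the same route as the paper's proof, which likewise traces the ancestors of the distinguished $\Diamond_i A$, discards every application of $\mathsf{eucl}$ whose principal formula is such an ancestor as redundant (for exactly the reason you give: the copy it would recreate already sits at the parent node, since diamond occurrences persist upward), and then annotates all remaining ancestors so that $\mathsf{\Diamond}$ and $\mathsf{tran}$ become $\diamondtimes$ and $\mathsf{tran}^\prime$. Your write-up is in fact more detailed than the paper's one-line redundancy claim; only the closing appeal to $\mathsf{upstr}$ is superfluous, as no box needs to be relocated in this argument.
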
 
\begin{proof}
In an annotated proof of $\Gamma \{ [\Diamond_i A , \Delta]_i \}$, consider all applications of the rule $\mathsf{eucl}$ with the principal formula being an ancestor of $\Diamond_i A$. All applications of these kind are redundant. Thus, we can obtain a proof $\pi$ of $\Gamma \{ [\Diamond_i A , \Delta]_i \}$ without these applications of $\mathsf{eucl}$. We annotate all ancestors of $\Diamond_i A$ in $\pi$ and obtain the annotated proof of $\Gamma \{ [\diamondtimes_i A , \Delta]_i \}$.
\end{proof}

\begin{lem}[Invertibility]\label{invertability}
For $\mathsf{GLP_{NS}} + \mathsf{cut}(\mathcal{C}) +\boxplus\text{-}\mathsf{cut}_d(\mathcal{C})$, all rules of the system and the rule
\[\AXC{$\Gamma \{ \emptyset \}$}
\LeftLabel{$\bot$}
\UIC{$\Gamma \{ \bot \}$}
\DisplayProof\]
are invertible with respect to annotated proofs.
\end{lem}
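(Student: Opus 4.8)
The plan is to follow the proof of Lemma~\ref{invertability 1} and argue by induction on the height of the given annotated proof of the conclusion, simultaneously for every rule to be inverted. It is convenient to split the rules into two groups. The rules $\mathsf{\Diamond}$, $\mathsf{tran}$, $\mathsf{eucl}$, $\mathsf{\diamondtimes}$ and $\mathsf{tran}^\prime$ are \emph{expanding}: in each of them the premise is obtained from the conclusion by inserting one extra (possibly annotated) formula into a single node of the corresponding tree. For these, invertibility is immediate from admissibility of weakening, which continues to hold for $\mathsf{GLP_{NS}} + \mathsf{cut}(\mathcal{C}) + \boxplus\text{-}\mathsf{cut}_d(\mathcal{C})$ with respect to annotated proofs, since inserting material into a node neither shortens any strict $i$-path nor decreases the depth of any node, so every side condition of $\boxplus\text{-}\mathsf{cut}_d(\mathcal{C})$ is preserved. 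The remaining rules $\mathsf{\vee}$, $\mathsf{\wedge}$, $\mathsf{\Box}$ and $\bot$ require the genuine height induction.

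For each of $\mathsf{\vee}$, $\mathsf{\wedge}$, $\mathsf{\Box}$, $\bot$, let $C$ denote the principal formula displayed in the conclusion $\Gamma\{C\}$ and let $\pi$ be an annotated proof of $\Gamma\{C\}$. In the base case $\Gamma\{C\}$ is an initial sequent; since an initial sequent owes its status to an occurrence of $p,\overline{p}$ or $\top$ and none of these can be the displayed $C$, the same occurrence witnesses that each premise is again initial. For the inductive step I would inspect the last rule $r'$ of $\pi$. If the principal position of $r'$ is disjoint from the displayed hole, then $r'$ acts in a context compatible with the inversion operation, so I apply the induction hypothesis to the premises of $r'$ and reapply $r'$, after checking that its side conditions still hold. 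If instead $r'$ introduces exactly the displayed $C$, then, because a disjunction, a conjunction and a boxed formula can be introduced as principal only by $\mathsf{\vee}$, $\mathsf{\wedge}$ and $\mathsf{\Box}$ respectively, the premise(s) of $r'$ are literally the desired premise(s); for $\bot$ this subcase never occurs, as no rule introduces $\bot$ as principal.

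The main obstacle is the permutation step when $r'$ is $\boxplus\text{-}\mathsf{cut}_d(\mathcal{C})$ (the case $r' = \mathsf{cut}(\mathcal{C})$ is routine, as that rule carries no path or depth constraints), since $\boxplus\text{-}\mathsf{cut}_d(\mathcal{C})$ requires both the strict $i$-path conditions between its holes and that the depth of its principal position be at least $d$. After applying the induction hypothesis to its two premises, I must re-apply it to the transformed premises and confirm that these constraints survive. This is exactly where the shape of the inversion operations matters: inverting $\mathsf{\vee}$, $\mathsf{\wedge}$ or $\bot$ only rewrites the contents of one node, while inverting $\mathsf{\Box}$ replaces $\Box_i A$ by the fresh child $[A,\Diamond_i\overline{A}]_i$; in either case no node is deleted, no depth decreases, and no path between the cut holes is shortened (compare the remark in the proof of Lemma~\ref{upstretch} that such applications can only become deeper). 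Hence the path and depth side conditions of $\boxplus\text{-}\mathsf{cut}_d(\mathcal{C})$ remain valid, the annotated occurrences $\diamondtimes_i\overline{A}$ in its left premise are left untouched, and the reapplication is legitimate, so the induction goes through.
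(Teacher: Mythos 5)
Your proof is correct and follows essentially the same route as the paper: the paper states this lemma without a separate argument, relying on the same ``standard induction on the heights of proofs'' it invokes for Lemma \ref{invertability 1}, which is exactly your height induction, with the diamond-type rules ($\mathsf{\Diamond}$, $\mathsf{tran}$, $\mathsf{eucl}$, $\mathsf{\diamondtimes}$, $\mathsf{tran}^\prime$) dispatched by admissible weakening. Your explicit check that inverting $\mathsf{\vee}$, $\mathsf{\wedge}$, $\bot$ and $\mathsf{\Box}$ deletes no node, decreases no depth, and shortens no strict $i$-path---so that applications of $\boxplus\text{-}\mathsf{cut}_d(\mathcal{C})$ can be reapplied after the permutation step---is precisely the point that makes the standard induction go through in the annotated setting.
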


\begin{lem}[Admissibility of structural rules]\label{admissible structural rules}
The rules of weakening, merge and monotonicity   
\[
\AXC{$\Gamma \{ \emptyset \}$} 
\LeftLabel{$\mathsf{weak}$} \UIC{$\Gamma \{ \Delta \}$} \DisplayProof \quad
\AXC{$\Gamma \{ [\Delta]_i , [\Sigma]_i\}$} \LeftLabel{$\mathsf{merge}$} 
\UIC{$\Gamma \{ [\Delta, \Sigma]_i \}$} 
\DisplayProof \quad 
\AXC{$\Gamma \{ [ \Delta]_i\}$} \LeftLabel{$\mathsf{mon}$} \RightLabel{$(i\leqslant j)$} \UIC{$\Gamma \{ [\Delta]_j \}$}
\DisplayProof\]
are admissible for $\mathsf{GLP_{NS}} + \mathsf{cut}(\mathcal{C}) +\boxplus\text{-}\mathsf{cut}_d(\mathcal{C})$ with respect to annotated proofs.
\end{lem}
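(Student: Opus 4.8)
The plan is to prove the three rules admissible by a single induction on the height of the given annotated proof, handling weakening, merge and monotonicity in parallel and in exactly the same style as the homonymous lemma for $\mathsf{GLP_{NS}} + \mathsf{cut}(\mathcal{C})$; the only genuinely new material is the cases of the annotated rules $\mathsf{\diamondtimes}$, $\mathsf{tran}^\prime$ and of the cut $\boxplus\text{-}\mathsf{cut}_d(\mathcal{C})$. For each structural rule and each possible last inference $R$ of the proof of the premise, I would apply the induction hypothesis to the premise(s) of $R$—performing on them the same weakening, merge or relabelling—and then re-apply $R$ to the transformed premises, checking that its side conditions still hold. Initial sequents are immediate, since inserting a formula multiset, fusing two sibling boxes, or raising a box label never destroys an occurrence of $p,\overline{p}$ or $\top$.

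The cases of $\mathsf{\diamondtimes}$ and $\mathsf{tran}^\prime$ are verbatim copies of those for $\mathsf{\Diamond}$ and $\mathsf{tran}$, because the inequality side condition $i\leqslant j$ of all four rules is never tightened by any of the three transformations: the modal index $i$ of a diamond is fixed, while the box label $j$ is only ever raised (by $\mathsf{mon}$) or left unchanged (by $\mathsf{weak}$ and $\mathsf{merge}$, the latter combining two $i$-boxes into a single $i$-box). The one strict inequality in the system, $i<j$ in $\mathsf{eucl}$, is preserved for the same reason: if monotonicity acts on the very box of an $\mathsf{eucl}$ inference it replaces $j$ by some $j^\prime\geqslant j>i$, and otherwise $j$ is untouched. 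Where merge meets one of the two boxes it must combine, the transformation is handled exactly as in the $\mathsf{cut}(\mathcal{C})$ version of the lemma, now read over annotated sequents.

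The substantive work is the commutation past $\boxplus\text{-}\mathsf{cut}_d(\mathcal{C})$, where I must re-establish both of its provisos—the strict $i$-paths from the node of the first hole to each of the remaining holes, and the depth bound $\geqslant d$ on the node of the first hole. I would record three stability facts. First, all three rules preserve the depth of every pre-existing node: weakening only appends new subtrees below a node, merge only fuses two sibling nodes that already sit at the same depth, and monotonicity changes no edge beyond its label; hence the requirement $\geqslant d$ is inherited automatically. Second, a strict $i$-path constrains its (downward) edges only by $\leqslant$, so raising an edge label from $i$ to $j\geqslant i$ under monotonicity keeps every strict path a strict path of the same index. Third, merge fuses the targets of two equally labelled sibling $i$-edges; since a strict $i$-path is an ancestor-to-descendant path with no upward segment, it can never run between the two merged-apart subtrees, and any strict path that survives merely re-routes through the fused node along edges of unchanged label. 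With these facts in hand, re-applying $\boxplus\text{-}\mathsf{cut}_d(\mathcal{C})$ to the transformed premises yields the transformed conclusion with all provisos intact.

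The main obstacle is precisely this last point for merge, together with the depth bound for the cut: one has to be certain that collapsing two $i$-boxes neither forces a forbidden sideways strict path nor lowers the depth of the cut's principal node below $d$. Both are settled by the single observation that strict $i$-paths are purely descending and that each of the three rules is depth-preserving on existing nodes, after which the re-application of every inference, including the plain $\mathsf{cut}(\mathcal{C})$ case, is routine.
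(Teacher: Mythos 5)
Your proposal is correct and takes essentially the same approach as the paper: the paper dismisses this lemma with the single line ``Simple transformations of proofs,'' and your height induction with rule-by-rule commutation---including the key checks that $\mathsf{weak}$, $\mathsf{merge}$ and $\mathsf{mon}$ preserve node depths and strict $i$-paths, so that $\boxplus\text{-}\mathsf{cut}_d(\mathcal{C})$ can be re-applied with its provisos intact---is precisely that routine transformation, spelled out in detail.
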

\begin{proof}
Simple transformations of proofs.
\end{proof}

\section{Cut Elimination}
In the present section we prove admissibility of the cut rule for $\mathsf{GLP_{NS}}$.
\begin{lem} \label{modal cut}
For an inference 
\[  
\AXC{$\pi_1$}
\noLine
\UIC{\vdots}
\noLine
\UIC{$\Gamma \{\Diamond_i \overline{A} \} \{ \Diamond_i \overline{A}\}^n$}
\AXC{$\pi_2$}
\noLine
\UIC{\vdots}
\noLine
\UIC{$\Gamma \{\Box_i A \} \{ \emptyset\}^n$} 
\LeftLabel{$\Box\text{-}\mathsf{cut}(\mathcal{C})$}
\RightLabel{$(A \in \mathcal{C})$}
\BIC{$\Gamma \{\emptyset\} \{ \emptyset\}^n$}
\DisplayProof
 \] 
where $\pi_1$ and $\pi_2$ are ordinary proofs in $\mathsf{GLP_{NS}} + \mathsf{cut}(\mathcal{C})$, there is an annotated proof of $\Gamma \{\emptyset\} \{ \emptyset\}^n $ in $\mathsf{GLP_{NS}} + \mathsf{cut}(\mathcal{C}) +\boxplus\text{-}\mathsf{cut}(\mathcal{C})$.

\end{lem}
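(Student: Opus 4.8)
The plan is to argue by induction on the height $|\pi_2|$ of the proof of the right premise, tracing the displayed $\Box_i A$ down to the inference that introduces it. If $\pi_2$ ends in an initial sequent, then since $\Box_i A$ is neither an atom nor $\top$, the witnessing pair $p,\overline{p}$ (or the occurrence of $\top$) lies in the part of the context shared with the conclusion; emptying the holes therefore leaves $\Gamma\{\emptyset\}\{\emptyset\}^n$ an initial sequent. If the last rule $r$ of $\pi_2$ acts away from the first hole (so that $\Box_i A$ sits in its context), I would invert the very same rule on $\pi_1$ — legitimate because every rule of the system is invertible (Lemma \ref{invertability 1}) — obtaining proofs of the corresponding smaller left premises, apply the induction hypothesis to each, and re-apply $r$ (which is sound on annotated proofs). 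The only point needing attention here is that $r$ preserves the $i$-paths between the holes, so that the smaller instances are again legitimate $\Box\text{-}\mathsf{cut}$'s; this is a routine inspection of the rules.

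The heart of the argument is the case where the last rule of $\pi_2$ is $\mathsf{\Box}$ introducing the displayed $\Box_i A$, which hands us a proof $\pi_2'$ of $\Gamma\{[A,\Diamond_i\overline{A}]_i\}\{\emptyset\}^n$. From $\pi_1$ I would, by weakening an empty $[\ ]_i$-box next to the first-hole occurrence of $\Diamond_i\overline{A}$ and then inverting the rules $\mathsf{\Diamond}$ and $\mathsf{tran}$ (Lemmas \ref{admissible structural rules}, \ref{invertability 1}), manufacture a proof of $\Gamma\{\Diamond_i\overline{A},[\overline{A},\Diamond_i\overline{A}]_i\}\{\Diamond_i\overline{A}\}^n$. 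Weakening $\pi_2'$ so that the two contexts agree and applying an ordinary $\mathsf{cut}(\mathcal{C})$ on $A$ — admissible since $A\in\mathcal{C}$ — then removes $A$ and leaves a proof of the residual sequent $\Gamma\{\Diamond_i\overline{A},[\Diamond_i\overline{A}]_i\}\{\Diamond_i\overline{A}\}^n$, in which the diamond produced by opening the box has descended one level, into an index-$i$ box.

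It remains to discharge these residual diamonds against $\Box_i A$ by a $\boxplus\text{-}\mathsf{cut}$, and this is where the real work lies. I would annotate the traced occurrence via Lemma \ref{ann lemma} (it is exactly a $\Diamond_i\overline{A}$ sitting inside a $[\ ]_i$-box), and then use the admissible box-moving and structural rules — the one-step moves underlying $\mathsf{str}$ and $\mathsf{upstr}$ (Lemmas \ref{stretch}, \ref{upstretch}), together with $\mathsf{merge}$ and $\mathsf{mon}$ — to transport this index-$i$ box along the prescribed $i$-paths until the annotated $\diamondtimes_i\overline{A}$ lies on a \emph{strict} $i$-path to each of its companion occurrences, the configuration demanded by $\boxplus\text{-}\mathsf{cut}$. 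Pairing the resulting left premise with a structurally adjusted copy of $\pi_2$, which supplies $\Box_i A$ in the first hole, then closes the derivation by one application of $\boxplus\text{-}\mathsf{cut}(\mathcal{C})$, and the whole construction stays inside $\mathsf{GLP_{NS}}+\mathsf{cut}(\mathcal{C})+\boxplus\text{-}\mathsf{cut}(\mathcal{C})$.

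The main obstacle is precisely this last, geometric step: $\Box\text{-}\mathsf{cut}$ furnishes only general $i$-paths, which may ascend through edges $>i$ before descending, whereas $\boxplus\text{-}\mathsf{cut}$ requires strict $i$-paths that descend only. The whole purpose of the annotation is that annotated diamonds are barred from $\mathsf{eucl}$, the single rule capable of pushing a diamond upward, so once the traced occurrence is annotated it can legitimately be confined to strict positions. The delicate part, which I expect to consume most of the proof, is verifying that the box-moving rules suffice to realise strict paths to \emph{all} companion occurrences simultaneously, and that the surplus outer occurrence of $\Diamond_i\overline{A}$ left at the first node is absorbed correctly rather than obstructing the strict-path condition.
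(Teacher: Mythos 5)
Your proposal takes a genuinely different route from the paper --- induction on $\lvert \pi_2 \rvert$ (the box side) rather than on $\lvert \pi_1 \rvert$ (the diamond side) --- and this choice creates a gap that your own last paragraph correctly identifies but does not close. By inducting on $\pi_2$ you defer \emph{all} of $\pi_1$'s uses of the $n+1$ traced diamonds to a single geometric step, performed after $\pi_2$ ends with $\Box$; that step cannot be carried out. After your cut on $A$ you hold $\Gamma \{\Diamond_i \overline{A}, [\Diamond_i \overline{A}]_i\} \{\Diamond_i \overline{A}\}^n$ and must erase $n+2$ diamond occurrences whose mutual positions are fixed by $\Gamma$: one at hole $1$, one inside a freshly weakened-in $i$-box, and $n$ at the secondary holes, which are connected to hole $1$ only by general $i$-paths that may ascend through edges $>i$. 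A $\boxplus\text{-}\mathsf{cut}$ discharging them needs \emph{strict} (descending) $i$-paths from its principal hole to all others, and no admissible rule can create these: $\mathsf{str}$ and $\mathsf{upstr}$ (Lemmas \ref{stretch}, \ref{upstretch}) relocate a boxed subsequent $[\Delta]_i$ that the prover is free to move, whereas the secondary occurrences are formulas sitting at fixed nodes of $\Gamma$; if such a node is not a descendant of hole $1$, nothing will make it one. Worse, those occurrences cannot even be annotated: Lemma \ref{ann lemma} applies only to a diamond lying inside a box of the \emph{same} index $i$, and the ancestors in $\pi_1$ of a secondary occurrence may genuinely pass through $\mathsf{eucl}$ (the very rule responsible for ascending $i$-paths, which has no annotated form), so no annotated proof exhibiting them as $\diamondtimes_i \overline{A}$ exists. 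Finally, even the two local occurrences leave debris: cutting them away yields $\Gamma \{[\,]_i\} \{\emptyset\}^n$, and deleting an empty box is not admissible (for instance $\Diamond_i \top, [\,]_i$ is provable while $\Diamond_i \top$ is not).

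The paper avoids all of this by inducting on $\pi_1$, so each use of a traced diamond is met one at a time: a $\mathsf{tran}$ or $\mathsf{eucl}$ inference on a traced occurrence merely adds one more hole and is absorbed by the induction hypothesis, and the only substantive case is a $\Diamond$-inference on a traced occurrence, which deposits $\overline{A}$ inside a box $[\Sigma]_j$ \emph{already present in} $\Gamma$. There the paper uses invertibility of $\Box$ on $\pi_2$ (not induction) to obtain $[A, \Diamond_i \overline{A}]_i$ at hole $1$, transports it with $\mathsf{str}$ along the general $i$-path to that very node, annotates it by Lemma \ref{ann lemma} (now it \emph{is} inside an $i$-box), and then assembles, via $\mathsf{mon}$/$\mathsf{merge}$ on one side and $\mathsf{upstr}$/$\Box$/$\mathsf{weak}$ on the other, the two premises of a $\boxplus\text{-}\mathsf{cut}$ with a single principal hole and no secondary holes --- so the strict-path proviso is vacuous --- lying strictly deeper in the tree; an ordinary $\mathsf{cut}(\mathcal{C})$ on $A \in \mathcal{C}$ then closes the case. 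In other words, the multi-hole geometry you are trying to repair at the end is never repaired in the paper at all: it is only threaded through the induction hypothesis, and every $\boxplus\text{-}\mathsf{cut}$ actually introduced is local. To fix your proof you would need an inner induction on $\pi_1$ inside your key case, at which point the argument collapses back into the paper's.
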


\begin{proof}
We prove $\mathsf{GLP_{NS}} + \mathsf{cut}(\mathcal{C})  +\boxplus\text{-}\mathsf{cut}(\mathcal{C}) \vdash \Gamma \{\emptyset\} \{ \emptyset\}^n$ by induction on $\rvert \pi_1 \lvert $. 
If $\rvert \pi_1 \lvert  = 0$, then $\Gamma \{\Diamond_i \overline{A} \} \{ \Diamond_i \overline{A}\}^n$ is an initial sequent. Hence, $\Gamma \{\emptyset\} \{ \emptyset\}^n$ is an initial sequent and $\mathsf{GLP_{NS}} + \mathsf{cut}(\mathcal{C}) +\boxplus\text{-}\mathsf{cut}(\mathcal{C}) \vdash \Gamma \{\emptyset\} \{ \emptyset\}^n$. 
Otherwise, consider the lowermost application of an inference rule in $\pi_1$. The proof $\pi_1$ has one of the following forms:
\[\AXC{$\pi^\prime_1$}
\noLine
\UIC{\vdots}
\noLine
\UIC{$\Gamma^\prime \{\Diamond_i \overline{A} \} \{ \Diamond_i \overline{A}\}^n$}
\LeftLabel{$\rho$}
\UIC{$\Gamma \{\Diamond_i \overline{A} \} \{ \Diamond_i \overline{A}\}^n$}
\DisplayProof \qquad
\AXC{$\pi^\prime_1$}
\noLine
\UIC{\vdots}
\noLine
\UIC{$\Gamma^\prime \{\Diamond_i \overline{A} \} \{ \Diamond_i \overline{A}\}^n$}
\AXC{$\pi^{\prime \prime}_1$}
\noLine
\UIC{\vdots}
\noLine
\UIC{$\Gamma^{\prime \prime} \{\Diamond_i \overline{A} \} \{ \Diamond_i \overline{A}\}^n$}
\LeftLabel{$\rho$}
\RightLabel{ .}
\BIC{$\Gamma \{\Diamond_i \overline{A} \} \{ \Diamond_i \overline{A}\}^n$}
\DisplayProof\]

Case 1. Suppose the principal position of this lowermost inference in $\pi_1$ coincides with one of the holes in $\Gamma \{\: \} \{ \:\}^n$. Then the rule $\rho$ equals to $\mathsf{tran}$, $\mathsf{eucl}$ or $\Diamond$.


Subcase A: the rule $\rho$ equals to $\mathsf{tran}$ or $\mathsf{eucl}$. The lowermost rule application in $\pi_1$ has the form
\begin{gather*}
\AXC{$\pi^\prime_1$}
\noLine
\UIC{\vdots}
\noLine
\UIC{$\Delta \{\Diamond_i \overline{A} \} \{ \Diamond_i \overline{A}\}^n \{ \Diamond_i \overline{A}\}$}
\LeftLabel{$\rho$}
\RightLabel{ ,}
\UIC{$\Delta \{\Diamond_i \overline{A} \} \{ \Diamond_i \overline{A}\}^n \{ \emptyset\}$}
\DisplayProof \label{Diamond form}
\end{gather*}
where $\Delta \{ \: \} \{ \: \}^n \{ \emptyset \} = \Gamma \{ \: \} \{ \: \}^n $, $\Delta \{ \: \} \{ \: \}^n \{ \Diamond_i \overline{A} \} = \Gamma^\prime \{ \: \} \{ \: \}^n $ and, in the sequent context $\Delta \{ \: \} \{ \: \}^n \{ \: \}$, there is an $i$-path from the node of the first hole to the node of the last hole. We see
\[
\AXC{$\pi^\prime_1$}
\noLine
\UIC{\vdots}
\noLine
\UIC{$\Delta \{\Diamond_i \overline{A} \} \{ \Diamond_i \overline{A}\}^n \{\Diamond_i \overline{A} \}$}
\AXC{$\pi_2$}
\noLine
\UIC{\vdots}
\noLine
\UIC{$\Delta \{\Box_i A_i \} \{ \emptyset\}^n \{ \emptyset\}$}
\LeftLabel{$\Box\text{-}\mathsf{cut}(\mathcal{C})$}
\RightLabel{$( A \in \mathcal{C})$,}
\BIC{$\Delta \{\emptyset\} \{ \emptyset\}^n \{ \emptyset\}$}
\DisplayProof 
 \]
where $\Delta \{ \emptyset \} \{ \emptyset \}^n \{ \emptyset \} = \Gamma \{ \emptyset \} \{ \emptyset \}^n $, $\Delta \{  \Diamond_i \overline{A} \} \{  \Diamond_i \overline{A}\}^n \{ \Diamond_i \overline{A} \} = \Gamma^\prime \{  \Diamond_i \overline{A} \} \{  \Diamond_i \overline{A} \}^n $ and $\Delta \{ \Box_i A]_i \} \{ \emptyset \}^n \{ \emptyset \} = \Gamma \{ \Box _i A \} \{ \emptyset \}^n $. 
Applying the induction hypothesis for $\pi^\prime_1$, we obtain $\mathsf{GLP_{NS}} + \mathsf{cut}(\mathcal{C}) +\boxplus\text{-}\mathsf{cut}(\mathcal{C}) \vdash \Gamma \{\emptyset\} \{ \emptyset\}^n$. 

Subcase B: $\rho$ equals to $\Diamond$.
The lowermost rule application in $\pi_1$ has the form
\begin{gather*}
\AXC{$\pi^\prime_1$}
\noLine
\UIC{\vdots}
\noLine
\UIC{$\Delta \{ \Diamond_i \overline{A} , [\overline{A}, \Sigma \{ \Diamond_i \overline{A} \}^k]_j\} \{ \Diamond_i \overline{A} \}^l$}
\LeftLabel{$\Diamond$}
\RightLabel{$(i\leqslant j)$,}
\UIC{$\Delta \{ \Diamond_i \overline{A} , [\Sigma \{ \Diamond_i \overline{A} \}^k]_j\} \{ \Diamond_i \overline{A} \}^l$}
\DisplayProof \label{Diamond form}
\end{gather*}
where sequent contexts $\Delta \{ \{ \: \}, [ \Sigma \{ \: \}^k]_j \} \{ \: \}^l $ and $\Delta \{ \{ \: \}, [\overline{A}, \Sigma \{ \: \}^k]_j \} \{ \: \}^l$ coincide with $\Gamma \{ \: \} \{ \: \}^n $ and $\Gamma^\prime \{ \: \} \{ \: \}^n $ up to a permutation of holes, respectively. We see
\[
\AXC{$\pi^\prime_1$}
\noLine
\UIC{\vdots}
\noLine
\UIC{$\Gamma^\prime \{\Diamond_i \overline{A} \} \{ \Diamond_i \overline{A}\}^n$}
\AXC{$\pi_2$}
\noLine
\UIC{\vdots}
\noLine
\UIC{$\Gamma \{\Box _i A \} \{ \emptyset\}^n$}
\LeftLabel{$\mathsf{weak}$}
\UIC{$\Gamma^\prime \{\Box_i A \} \{ \emptyset\}^n$}
\LeftLabel{$\Box\text{-}\mathsf{cut}(\mathcal{C})$}
\RightLabel{$( A \in \mathcal{C})$,}
\BIC{$\Gamma^\prime \{\emptyset\} \{ \emptyset\}^n$}
\DisplayProof 
 \]
where the rule $\mathsf{weak}$ is admissible for $\mathsf{GLP_{NS}} + \mathsf{cut}(\mathcal{C}) $ by Lemma \ref{admissible structural rules} and $\Gamma^\prime \{\emptyset\} \{ \emptyset\}^n = \Delta \{  [\overline{A}, \Sigma \{ \emptyset \}^k]_j \} \{ \emptyset \}^l$. Applying the induction hypothesis for $\pi^\prime_1$, we get $\mathsf{GLP_{NS}} + \mathsf{cut}(\mathcal{C}) + \boxplus\text{-}\mathsf{cut}(\mathcal{C}) \vdash \Delta \{  [\overline{A}, \Sigma \{ \emptyset \}^k]_j \} \{ \emptyset \}^l$. 

Now we claim $\mathsf{GLP_{NS}} + \mathsf{cut}(\mathcal{C}) + \boxplus\text{-}\mathsf{cut}(\mathcal{C}) \vdash \Delta \{  [A, \Sigma \{ \emptyset \}^k]_j \} \{ \emptyset \}^l$.  
By Lemma \ref{invertability}, the rule $\Box$ is invertible for $\mathsf{GLP_{NS}} + \mathsf{cut}(\mathcal{C}) $. We see
\begin{gather}\label{deriv1}
\AXC{$\pi_2$}
\noLine
\UIC{\vdots}
\noLine
\UIC{$\Gamma \{\Box_i A \} \{ \emptyset\}^n$}
\LeftLabel{$\mathsf{\delta}$}
\UIC{$\Gamma \{[A, \Diamond_i \overline{A}]_i, \} \{ \emptyset\}^n$}
\LeftLabel{$\mathsf{str}$}
\RightLabel{ ,}
\UIC{$\Delta \{ [A, \Diamond_i \overline{A}]_i, [ \Sigma \{ \emptyset \}^k]_j \} \{ \emptyset \}^l$}
\DisplayProof
\end{gather}
where $\delta$ is the inverse of the rule $\Box$. We obtain $\mathsf{GLP_{NS}} + \mathsf{cut}(\mathcal{C}) \vdash \Delta \{ [A, \Diamond_i \overline{A}]_i, [ \Sigma \{ \emptyset \}^k]_j \} \{ \emptyset \}^l$ and $\mathsf{GLP_{NS}} + \mathsf{cut}(\mathcal{C})  +\boxplus\text{-}\mathsf{cut}(\mathcal{C}) \vdash \Delta \{ [A, \Diamond_i \overline{A}]_i, [ \Sigma \{ \emptyset \}^k]_j \} \{ \emptyset \}^l$. By Lemma \ref{ann lemma}, there is an annotated proof of $\Delta \{ [A, \diamondtimes_i \overline{A}]_i, [ \Sigma \{ \emptyset \}^k]_j \} \{ \emptyset \}^l$ in $\mathsf{GLP_{NS}} + \mathsf{cut}(\mathcal{C})  +\boxplus\text{-}\mathsf{cut}(\mathcal{C})$. 

Continuing the derivation of (\ref{deriv1}), we see
\begin{gather*}
\AXC{$\Delta \{ [A, \diamondtimes_i \overline{A}]_i, [ \Sigma \{ \emptyset \}^k]_j \} \{ \emptyset \}^l$}
\LeftLabel{$\mathsf{mon}$}
\RightLabel{$(i\leqslant j)$}
\UIC{$\Delta \{ [A, \diamondtimes_i \overline{A}]_j, [ \Sigma \{ \emptyset \}^k]_j \} \{ \emptyset \}^l$}
\LeftLabel{$\mathsf{merge}$}
\UIC{$\Delta \{ [A, \diamondtimes_i \overline{A}, \Sigma \{ \emptyset \}^k]_j \} \{ \emptyset \}^l$}
\AXC{$\Delta \{ [A, \Diamond_i \overline{A}]_i, [ \Sigma \{ \emptyset \}^k]_j \} \{ \emptyset \}^l$}
\LeftLabel{$\mathsf{upstr}$}
\UIC{$\Delta \{ [[A, \Diamond_i \overline{A}]_i,  \Sigma \{ \emptyset \}^k]_j \} \{ \emptyset \}^l$}
\LeftLabel{$\mathsf{\Box}$}
\UIC{$\Delta \{ [\Box_i A,  \Sigma \{ \emptyset \}^k]_j \} \{ \emptyset \}^l$}
\LeftLabel{$\mathsf{weak}$}
\UIC{$\Delta \{ [A, \Box_i A,  \Sigma \{ \emptyset \}^k]_j \} \{ \emptyset \}^l$}
\LeftLabel{$\boxplus\text{-}\mathsf{cut}(\mathcal{C})$}
\RightLabel{$(  A \in \mathcal{C})$,}
\BIC{$\Delta \{ [ A,  \Sigma \{ \emptyset \}^k]_j \} \{ \emptyset \}^l$}
\DisplayProof
\end{gather*}
where rules $\mathsf{mon}$,  $\mathsf{merge}$, $\mathsf{weak}$ and $\mathsf{upstr}$ are admissible for $\mathsf{GLP_{NS}} + \mathsf{cut}(\mathcal{C})  +\boxplus\text{-}\mathsf{cut}(\mathcal{C})$ by Lemmata \ref{admissible structural rules} and \ref{upstretch}.

We obtain that sequents $\Delta \{  [\overline{A}, \Sigma \{ \emptyset \}^k]_j \} \{ \emptyset \}^l$ and $\Delta \{  [A, \Sigma \{ \emptyset \}^k]_j \} \{ \emptyset \}^l$ are provable in $\mathsf{GLP_{NS}} + \mathsf{cut}(\mathcal{C})  +\boxplus\text{-}\mathsf{cut}(\mathcal{C})$.
Applying the rule $\mathsf{cut}(\mathcal{C})$, we get $\mathsf{GLP_{NS}} + \mathsf{cut}(\mathcal{C})  +\boxplus\text{-}\mathsf{cut}(\mathcal{C}) \vdash \Delta \{  [ \Sigma \{ \emptyset \}^k]_j \} \{ \emptyset \}^l$. 
Recall that $\Gamma \{\emptyset\} \{ \emptyset\}^n = \Delta \{  [\Sigma \{ \emptyset \}^k]_j \} \{ \emptyset \}^l$. Then we see $\mathsf{GLP_{NS}} + \mathsf{cut}(\mathcal{C}) + \boxplus\text{-}\mathsf{cut}(\mathcal{C}) \vdash \Gamma \{\emptyset\} \{ \emptyset\}^n$.

Case 2.
Suppose the principal position of the lowermost rule application $\rho$ in $\pi_1$ differs with every hole in $\Gamma \{\: \} \{ \:\}^n$.
By Lemma \ref{invertability}, the rule $\rho$ is invertible for $\mathsf{GLP_{NS}} + \mathsf{cut}(\mathcal{C})$. Thus, we have
\[
\AXC{$\pi^\prime_1$}
\noLine
\UIC{\vdots}
\noLine
\UIC{$\Gamma^\prime \{\Diamond_i \overline{A} \} \{ \Diamond_i \overline{A}\}^n$}
\AXC{$\pi_2$}
\noLine
\UIC{\vdots}
\noLine
\UIC{$\Gamma \{\Box_i A \} \{ \emptyset\}^n$}
\LeftLabel{$\overline{\rho}$}
\UIC{$\Gamma^\prime \{\Box_i A \} \{ \emptyset\}^n$}
\LeftLabel{$\Box\text{-}\mathsf{cut}(\mathcal{C})$}
\RightLabel{$(  A \in \mathcal{C})$,}
\BIC{$\Gamma^\prime \{\emptyset\} \{ \emptyset\}^n$}
\DisplayProof 
 \]
where $\overline{\rho}$ is the corresponding inverse of $\rho$. 
Applying the induction hypothesis for $\pi^\prime_1$, we have $\mathsf{GLP_{NS}} + \mathsf{cut}(\mathcal{C}) + \boxplus\text{-}\mathsf{cut}(\mathcal{C}) \vdash \Gamma^\prime \{\emptyset\} \{ \emptyset\}^n$. If the rule $\rho$ has two premises, then we have $\mathsf{GLP_{NS}} + \mathsf{cut}(\mathcal{C}) + \boxplus\text{-}\mathsf{cut}(\mathcal{C}) \vdash \Gamma^{\prime \prime} \{\emptyset\} \{ \emptyset\}^n$ analogously.
Applying the rule $\rho$ to the sequent $\Gamma^\prime \{\emptyset\} \{ \emptyset\}^n$ (to the sequents $\Gamma^\prime \{\emptyset\} \{ \emptyset\}^n$ and $\Gamma^{\prime \prime} \{\emptyset\} \{ \emptyset\}^n$),
we immediately obtain $\mathsf{GLP_{NS}} + \mathsf{cut}(\mathcal{C}) + \boxplus\text{-}\mathsf{cut}(\mathcal{C}) \vdash \Gamma \{\emptyset\} \{ \emptyset\}^n$.

 

\end{proof}


Let us denote the adequate set of all proper subformulas of a formula $A$ and their negations by $\mathcal{C}_A$.
 
\begin{lem} \label{reduction lemma}
For an inference 
\[  
\AXC{$\pi_1$}
\noLine
\UIC{\vdots}
\noLine
\UIC{$\Gamma \{A\}$}
\AXC{$\pi_2$}
\noLine
\UIC{\vdots}
\noLine
\UIC{$\Gamma \{\overline{A}\}$} \LeftLabel{$\mathsf{cut}(A)$} 
\RightLabel{ ,}
\BIC{$\Gamma \{ \emptyset \}$} 
\DisplayProof
 \] 
where $\pi_1$ and $\pi_2$ are ordinary proofs in $\mathsf{GLP_{NS}} + \mathsf{cut}(\mathcal{C}_A)$, there is an annotated proof of $\Gamma \{\emptyset\} $ in $\mathsf{GLP_{NS}} + \mathsf{cut}(\mathcal{C}_A) + \boxplus\text{-}\mathsf{cut}(\mathcal{C}_A)$.
\end{lem}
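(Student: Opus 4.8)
The plan is to split on the shape of the cut formula $A$, isolating the modal case---which is the hard, self-contained part of the argument and has already been packaged as Lemma \ref{modal cut}---and treating the remaining propositional cases by a short induction that reduces the cut to cuts on proper subformulas of $A$.

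The modal case is immediate. If $A = \Box_i B$, then $\overline{A} = \Diamond_i \overline{B}$ and $B$ is a proper subformula of $A$, so $B \in \mathcal{C}_A$. Now $\pi_2$ proves $\Gamma\{\Diamond_i \overline{B}\}$ and $\pi_1$ proves $\Gamma\{\Box_i B\}$, and these are exactly the two premises of an instance of $\Box\text{-}\mathsf{cut}(\mathcal{C}_A)$ with $n = 0$ and principal position $\Gamma\{\,\}$. Hence Lemma \ref{modal cut} applies verbatim and yields an annotated proof of $\Gamma\{\emptyset\}$ in $\mathsf{GLP_{NS}} + \mathsf{cut}(\mathcal{C}_A) + \boxplus\text{-}\mathsf{cut}(\mathcal{C}_A)$. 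No induction is needed here, and this is the only place where annotations and $\boxplus\text{-}\mathsf{cut}$ are created.

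For $A$ not a box formula I would show, by induction on $\lvert\pi_1\rvert$ (imitating the invertibility-based strategy of Lemma \ref{modal cut}), that $\mathsf{GLP_{NS}} + \mathsf{cut}(\mathcal{C}_A) \vdash \Gamma\{\emptyset\}$ by an ordinary proof, which is in particular an annotated one. The cases $A = \top$ and $A = \bot$ are disposed of directly: invertibility of the $\bot$ rule (Lemma \ref{invertability 1}) applied to $\pi_2$, respectively to $\pi_1$, simply deletes the cut formula. Otherwise I inspect the lowermost rule $\rho$ of $\pi_1$. If its principal position differs from the cut position $\Gamma\{\,\}$, then $A$ is not principal and I \emph{commute} the cut upward: by invertibility I apply the inverse of $\rho$ to $\pi_2$, cut the premise(s) of $\rho$ against the result using the induction hypothesis (the left height strictly drops), and reapply $\rho$. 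If $A$ is principal in $\rho$, I perform the key reduction according to the shape of $A$. For $A = B \wedge C$ (and dually $A = B \vee C$) I invert the $\vee$ rule on $\pi_2$ to obtain $\Gamma\{\overline{B}, \overline{C}\}$ and, using admissibility of weakening (Lemma \ref{admissible structural rules}), replace the single cut by successive cuts on $B$ and on $C$; since $B, C \in \mathcal{C}_A$ these are instances of $\mathsf{cut}(\mathcal{C}_A)$. For $A$ an atom $p$ the rule $\rho$ must be an initial sequent, so either $\Gamma\{\emptyset\}$ is itself initial, or the displayed $p$ is matched by some $\overline{p}$ in the context at the hole node, in which case $\pi_2$ proves a sequent with two occurrences of $\overline{p}$ at that node and one application of the weak atomic contraction lemma collapses it to $\Gamma\{\emptyset\}$.

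I expect the main obstacle to be the bookkeeping-heavy commutation step: for each rule $\rho \in \{\wedge, \vee, \Box, \Diamond, \mathsf{tran}, \mathsf{eucl}\}$ one must check that applying the inverse of $\rho$ to $\pi_2$ produces precisely the premise-shaped sequent on which the induction hypothesis can be invoked, and that the context surrounding the cut position survives the manipulation unchanged (the cut hole never lies inside a box opened by $\rho$, since it must persist into the conclusion $\Gamma\{A\}$). The conceptual content, however, is concentrated entirely in the box case and is already supplied by Lemma \ref{modal cut}; the non-modal argument never leaves ordinary proofs of $\mathsf{GLP_{NS}} + \mathsf{cut}(\mathcal{C}_A)$, which is exactly why the statement can be phrased with annotated proofs and the rule $\boxplus\text{-}\mathsf{cut}(\mathcal{C}_A)$ without either intruding outside the modal reduction.
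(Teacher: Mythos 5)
Your proposal is essentially the paper's own proof: the paper likewise splits on the shape of the cut formula, dispatches the modal case to Lemma \ref{modal cut} as a $\Box\text{-}\mathsf{cut}(\mathcal{C}_A)$ instance with $n=0$, handles $\top$, $\bot$ and $B \wedge C$ (dually $B \vee C$) via invertibility (Lemma \ref{invertability 1}), weakening, and $\mathsf{cut}(\mathcal{C}_A)$ applied to $B$ and $C$, and treats atoms by a sub-induction on the height of $\pi_1$ finishing with atomic contraction. The one repair needed is that your ``not a box formula'' branch literally includes $A = \Diamond_i C$, for which your commutation induction has no principal-formula reduction (that is exactly the hard case); do what the paper does and assume $A = \Box_i B$ w.l.o.g., since the cut is symmetric under swapping $\pi_1$ and $\pi_2$ and $\mathcal{C}_A = \mathcal{C}_{\overline{A}}$, so the diamond case reduces to your box case.
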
 
\begin{proof}
We prove $\mathsf{GLP_{NS}} + \mathsf{cut}(\mathcal{C}_A) + \boxplus\text{-}\mathsf{cut}(\mathcal{C}_A)  \vdash \Gamma \{ \emptyset \}$ by induction on the structure of the cut formula $A$.


Case 1: $A$ is of the form $p$ (or $\overline{p} $). The case is established by standard sub-induction on $\rvert \pi_1 \lvert$.
Both cases of $p$ or $\overline{p} $ are completely analogous. Hence, we assume $A = p$. If $\rvert \pi_1 \lvert =0$, then $\Gamma \{p\}$ is an initial sequent. Suppose $\Gamma \{\emptyset\}$ is also an initial sequent. Then we immediately have $\mathsf{GLP_{NS}} + \mathsf{cut}(\mathcal{C}_A) + \boxplus\text{-}\mathsf{cut}(\mathcal{C}_A)  \vdash \Gamma \{ \emptyset \}$. Otherwise, $\Gamma \{ \: \}$ has the form $\Delta \{ \overline{p}, \{ \: \} \}$. Then $\pi_2$ is a proof of $\Delta \{ \overline{p},  \overline{p}  \}$. Applying admissibility of the contraction rule for $\mathsf{GLP_{NS}} + \mathsf{cut}(\mathcal{C}_A) $ (see Lemma \ref{admissible structural rules}), we obtain  
$\mathsf{GLP_{NS}} + \mathsf{cut}(\mathcal{C}_A)  \vdash \Delta \{ \overline{p}\}$ and $\mathsf{GLP_{NS}} + \mathsf{cut}(\mathcal{C}_A) + \boxplus\text{-}\mathsf{cut}(\mathcal{C}_A)  \vdash \Delta \{ \overline{p}\}$. Recall that $ \Delta \{ \overline{p}\} = \Gamma \{ \emptyset \} $. The induction step is straightforward, so we omit it.

Case 2: $A$ is of the form $\top$ (or $\bot$). W.l.o.g. we assume $A = \top$. Then we have
\[\AXC{$\pi_2$}
\noLine
\UIC{\vdots}
\noLine
\UIC{$\Gamma \{ \bot\}$}
\LeftLabel{$\rho$}
\RightLabel{ ,}
\UIC{$\Gamma \{ \emptyset \}$} 
\DisplayProof
\]
where the rule $\rho$ is admissible for $\mathsf{GLP_{NS}} + \mathsf{cut}(\mathcal{C}_A) $ by Lemma \ref{invertability}.

Case 3: $A$ has the form $B \wedge C$ (or $\overline{B} \vee \overline{C}$). W.l.o.g. we assume $A= B \wedge C$. By Lemma \ref{invertability}, the introduction rules for $\mathsf{\wedge}$ and $\mathsf{\vee}$ are invertible for $\mathsf{GLP_{NS}} + \mathsf{cut}(\mathcal{C}_A) $.
Then we have
\begin{gather*}
\AXC{$\pi_1$}
\noLine
\UIC{\vdots}
\noLine
\UIC{$\Gamma \{ B \wedge C\}$}
\LeftLabel{$\mu_1$}
\UIC{$\Gamma \{ B \} $} 
\AXC{$\pi_1$}
\noLine
\UIC{\vdots}
\noLine
\UIC{$\Gamma \{ B \wedge C\}$}
\LeftLabel{$\mu_2$}
\UIC{$\Gamma \{ C \} $}
\LeftLabel{$\mathsf{weak}$}
\UIC{$\Gamma \{ \overline{B},C \} $}
\AXC{$\pi_2$}
\noLine
\UIC{\vdots}
\noLine
\UIC{$\Gamma \{ \overline{B} \vee \overline{C}\}$}
\LeftLabel{$\mu_3$}
\UIC{$\Gamma \{ \overline{B},\overline{C} \} $} 
\LeftLabel{$\mathsf{cut}(\mathcal{C}_A)$}
\RightLabel{$(C \in \mathcal{C}_A)$}
\BIC{$\Gamma \{ \overline{B} \} $} 
\LeftLabel{$\mathsf{cut}(\mathcal{C}_A)$}
\RightLabel{$(B \in \mathcal{C}_A)$,}
\BIC{$\Gamma \{ \emptyset \}$}
\DisplayProof
\end{gather*}
where $\mu_1$, $\mu_2$ and $\mu_3$ are the inverses of the introduction rules for $\mathsf{\wedge}$ and $\mathsf{\vee}$, and the rule $\mathsf{weak}$ is admissible by Lemma \ref{admissible structural rules}. 

Case 4: $A$ is of the form $\Box_i B$ (or $\Diamond_i \overline{B}$). W.l.lo.g. we assume $A= \Box_i B$. We have
\[\AXC{$\pi_1$}
\noLine
\UIC{\vdots}
\noLine
\UIC{$\Gamma \{\Box_i B\}$}
\AXC{$\pi_2$}
\noLine
\UIC{\vdots}
\noLine
\UIC{$\Gamma \{\Diamond_i \overline{B}\}$}
\LeftLabel{$\Box\text{-}\mathsf{cut}(\mathcal{C}_A)$} 
\RightLabel{$( B \in \mathcal{C}_A)$.}
\BIC{$\Gamma \{ \emptyset \}$} 
\DisplayProof
\]
By Lemma \ref{modal cut} there is an annotated proof of $\Gamma \{\emptyset\} $ in $\mathsf{GLP_{NS}} + \mathsf{cut}(\mathcal{C}_A) + \boxplus\text{-}\mathsf{cut}(\mathcal{C}_A)$.
\end{proof}

\begin{lem} \label{upmodal cut} \label{sasaki lemma 1}
For an inference 
\[  
\AXC{$\pi_1$}
\noLine
\UIC{\vdots}
\noLine
\UIC{$\Gamma \{\diamondtimes_i \overline{A} \} \{ \diamondtimes_i \overline{A}\}^n$}
\AXC{$\pi_2$}
\noLine
\UIC{\vdots}
\noLine
\UIC{$\Gamma \{\Box_i A \} \{ \emptyset\}^n$} 
\LeftLabel{$\boxplus\text{-}\mathsf{cut}_d(\mathcal{C})$}
\RightLabel{$( A \in \mathcal{C})$,}
\BIC{$\Gamma \{\emptyset\} \{ \emptyset\}^n$}
\DisplayProof
 \] 
where $\pi_1$ and $\pi_2$ are annotated proofs in $\mathsf{GLP_{NS}} + \mathsf{cut}(\mathcal{C}) + \boxplus\text{-}\mathsf{cut}_{d+1}(\mathcal{C}) $, there is an annotated proof of $\Gamma \{\emptyset\} \{ \emptyset\}^n $ in $\mathsf{GLP_{NS}} + \mathsf{cut}(\mathcal{C}) + \boxplus\text{-}\mathsf{cut}_{d+1}(\mathcal{C}) $.

\end{lem}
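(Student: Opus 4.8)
The plan is to argue by induction on $\lvert \pi_1 \rvert$, following the proof of Lemma~\ref{modal cut} but tracking the depth at which every $\boxplus\text{-}\mathsf{cut}$ is applied. If $\lvert \pi_1 \rvert = 0$ then $\Gamma\{\diamondtimes_i\overline{A}\}\{\diamondtimes_i\overline{A}\}^n$ is an initial sequent; since an annotated formula $\diamondtimes_i\overline{A}$ is neither an atom, a negated atom, nor $\top$, deleting all the displayed occurrences leaves an initial sequent $\Gamma\{\emptyset\}\{\emptyset\}^n$, and we are done. For the inductive step I would look at the lowermost rule $\rho$ of $\pi_1$ and split, exactly as in Lemma~\ref{modal cut}, according to whether its principal position coincides with one of the holes (Case~1) or is disjoint from all of them (Case~2).

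The main simplification in Case~1 is an annotation effect: the principal formula is then the annotated $\diamondtimes_i\overline{A}$, and since the system has annotated variants only of $\mathsf{\Diamond}$ and $\mathsf{tran}$ (there is no annotated $\mathsf{eucl}$), the rule $\rho$ must be $\mathsf{\diamondtimes}$ or $\mathsf{tran}^\prime$. This is precisely what keeps every relevant path a \emph{strict} $i$-path: the $\mathsf{eucl}$-step of Lemma~\ref{modal cut}, which moved a copy of the cut formula to an ancestor and produced a non-strict $i$-path, can no longer occur. If $\rho = \mathsf{tran}^\prime$, the premise of $\rho$ contains one additional copy of $\diamondtimes_i\overline{A}$, sitting at a strict $i$-descendant of the hole at which $\rho$ acts (hence of the principal hole). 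I would weaken $\pi_2$ by an empty hole at that node, form the instance of $\boxplus\text{-}\mathsf{cut}_d(\mathcal{C})$ with $n+1$ minor holes — its principal hole still at depth $\geq d$ and all new paths still strict — and finish the subcase by the induction hypothesis on the shorter premise.

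The substantial subcase is $\rho = \mathsf{\diamondtimes}$, which I would treat as the $\mathsf{\Diamond}$-subcase of Lemma~\ref{modal cut}. Writing the premise of $\rho$ as $\Delta\{\diamondtimes_i\overline{A}, [\overline{A}, \Sigma\{\diamondtimes_i\overline{A}\}^k]_j\}\{\diamondtimes_i\overline{A}\}^l$ with $n = k+l$ and $i \leq j$, I would first cut the extra $\overline{A}$ away: applying $\boxplus\text{-}\mathsf{cut}_d(\mathcal{C})$ to this premise and to a suitably weakened $\pi_2$, and invoking the induction hypothesis on the shorter premise, yields an annotated proof of $\Delta\{[\overline{A}, \Sigma\{\emptyset\}^k]_j\}\{\emptyset\}^l$. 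I would then build a proof of $\Delta\{[A, \Sigma\{\emptyset\}^k]_j\}\{\emptyset\}^l$: invert $\mathsf{\Box}$ on $\pi_2$ by Lemma~\ref{invertability}, so that the box $[A, \Diamond_i\overline{A}]_i$ appears as a sibling of $[\Sigma\{\emptyset\}^k]_j$; on one copy annotate by Lemma~\ref{ann lemma} and push $\diamondtimes_i\overline{A}$ into the $j$-box with $\mathsf{mon}$ and $\mathsf{merge}$ (Lemma~\ref{admissible structural rules}); on another copy move the box inside with $\mathsf{upstr}$ (Lemma~\ref{upstretch}) and apply $\mathsf{\Box}$ and $\mathsf{weak}$ to obtain $[A, \Box_i A, \Sigma\{\emptyset\}^k]_j$. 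The decisive observation is that the single $\boxplus\text{-}\mathsf{cut}$ joining these two has its principal formula lying \emph{inside} the box $[\Sigma\{\emptyset\}^k]_j$, i.e.\ at a node one level below the original first hole; as that hole had depth $\geq d$, the new cut has depth $\geq d+1$ and is a legitimate $\boxplus\text{-}\mathsf{cut}_{d+1}(\mathcal{C})$. An ordinary $\mathsf{cut}(\mathcal{C})$ on $A$ then combines the two proofs into $\Delta\{[\Sigma\{\emptyset\}^k]_j\}\{\emptyset\}^l = \Gamma\{\emptyset\}\{\emptyset\}^n$.

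In Case~2 I would invert $\rho$ on $\pi_2$ by Lemma~\ref{invertability}, apply $\boxplus\text{-}\mathsf{cut}_d(\mathcal{C})$ to $\pi_1'$ and the inverted proof — using that no rule decreases the depth of a node surviving to its conclusion, so that the principal hole still has depth $\geq d$ — conclude by the induction hypothesis on the shorter premise(s), and reapply $\rho$. I expect the genuine obstacle to be the $\mathsf{\diamondtimes}$ subcase, and inside it the bookkeeping showing that the newly introduced cut really sits at depth $\geq d+1$ and that $\mathsf{upstr}$ (rather than $\mathsf{str}$) must be used, so that by Lemma~\ref{upstretch} every $\boxplus\text{-}\mathsf{cut}$ in the constructed subproofs only moves deeper; this is exactly what confines the final derivation to $\mathsf{GLP_{NS}} + \mathsf{cut}(\mathcal{C}) + \boxplus\text{-}\mathsf{cut}_{d+1}(\mathcal{C})$.
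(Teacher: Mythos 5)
Your proposal is correct and takes essentially the same approach as the paper: the paper's own proof of this lemma is exactly the reduction you describe, namely rerun the argument of Lemma~\ref{modal cut}, observing that annotation rules out $\mathsf{eucl}$ acting at the holes (so all relevant paths stay strict), that every use of $\mathsf{str}$ becomes a use of $\mathsf{upstr}$ justified by Lemma~\ref{upstretch}, and that the newly created $\boxplus\text{-}\mathsf{cut}$ lands at depth $\geqslant d+1$. One cosmetic point: when the principal $\diamondtimes_i\overline{A}$ of $\rho$ sits at a minor hole rather than at the first one, inverting $\mathsf{\Box}$ on $\pi_2$ places $[A,\Diamond_i\overline{A}]_i$ at the first hole, and an application of $\mathsf{upstr}$ along the strict $i$-path is what brings it next to $[\Sigma\{\emptyset\}^k]_j$ before your two branches --- but this is precisely the $\mathsf{str}\mapsto\mathsf{upstr}$ replacement you already flag, so there is no gap.
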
 
\begin{proof}
The proof is analogous to the proof of Lemma \ref{modal cut}. The only difference is as follows: The principal position of an application the rule $\mathsf{eucl}$ can't coincide with one of the holes in $\Gamma \{\:\} \{ \:\}^n$. Thus, all applications of the rule $\mathsf{str}$ from the proof of Lemma \ref{modal cut} appear to be the applications of the rule $\mathsf{upstr}$. For these applications we apply Lemma \ref{upstretch} instead of Lemma \ref{stretch}. In addition, the applications of the rule $\boxplus\text{-}\mathsf{cut}(\mathcal{C}) $ appear to be applications of $\boxplus\text{-}\mathsf{cut}_{d+1}(\mathcal{C}) $. 
\end{proof}

\begin{lem} \label{sasaki lemma 2}
For any sequent $\Gamma$ and any adequate set $\mathcal{C}$ there is a natural number $d$ such that if there is an annotated proof of $\Gamma$ in $\mathsf{GLP_{NS}} + \mathsf{cut}(\mathcal{C}) + \boxplus\text{-}\mathsf{cut}_d(\mathcal{C})$, then there is an ordinary proof of $\Gamma$ in $\mathsf{GLP_{NS}} + \mathsf{cut}(\mathcal{C})$.
\end{lem}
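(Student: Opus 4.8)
The statement is, in effect, the admissibility of $\boxplus\text{-}\mathsf{cut}$: once all applications of $\boxplus\text{-}\mathsf{cut}$ can be discarded and the annotations erased, what remains is an ordinary proof in $\mathsf{GLP_{NS}} + \mathsf{cut}(\mathcal{C})$. The plan is to exploit Lemma~\ref{upmodal cut}, which trades a single $\boxplus\text{-}\mathsf{cut}_d(\mathcal{C})$ for applications of $\boxplus\text{-}\mathsf{cut}_{d+1}(\mathcal{C})$ --- that is, it pushes every modal cut strictly deeper --- and to iterate this until no modal cut is left. The first move is to lift Lemma~\ref{upmodal cut} from a single inference to whole proofs: by induction on the height of an annotated proof of $\Gamma$ in $\mathsf{GLP_{NS}} + \mathsf{cut}(\mathcal{C}) + \boxplus\text{-}\mathsf{cut}_d(\mathcal{C})$, one shows that $\Gamma$ has an annotated proof in $\mathsf{GLP_{NS}} + \mathsf{cut}(\mathcal{C}) + \boxplus\text{-}\mathsf{cut}_{d+1}(\mathcal{C})$. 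In the inductive step every rule other than a $\boxplus\text{-}\mathsf{cut}$ whose principal position sits at depth exactly $d$ is simply reapplied to the (inductively transformed) premises; a $\boxplus\text{-}\mathsf{cut}$ already at depth greater than $d$ is retained verbatim, being a legal instance of $\boxplus\text{-}\mathsf{cut}_{d+1}(\mathcal{C})$; and for a $\boxplus\text{-}\mathsf{cut}$ at depth exactly $d$ the two subproofs are first converted into proofs with $\boxplus\text{-}\mathsf{cut}_{d+1}(\mathcal{C})$ by the induction hypothesis and then Lemma~\ref{upmodal cut} is invoked.

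Iterating this step raises the depth index $d \mapsto d+1 \mapsto \dotso$, and the aim is to reach a stage at which no $\boxplus\text{-}\mathsf{cut}$ survives, leaving an annotated proof in $\mathsf{GLP_{NS}} + \mathsf{cut}(\mathcal{C})$; the number $d$ promised by the lemma is the value at which the iteration has settled. The concluding move is then routine: erasing the annotations, i.e.\ replacing every $\diamondtimes_i B$ by $\Diamond_i B$ and every instance of the annotated rules $\diamondtimes$ and $\mathsf{tran}^\prime$ by $\mathsf{\Diamond}$ and $\mathsf{tran}$, turns the derivation into an ordinary one, since these annotated rules become exactly their unannotated counterparts and the end-sequent $\Gamma$ carries no annotations. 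This yields the required ordinary proof of $\Gamma$ in $\mathsf{GLP_{NS}} + \mathsf{cut}(\mathcal{C})$.

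The hard part is the termination of the iteration. One cannot finish merely by driving every modal cut below a fixed depth: the transformation behind Lemma~\ref{upmodal cut} uses $\mathsf{upstr}$ to relocate a boxed sequent into a strictly deeper position, so it can increase the nesting depth of the proof, and in general there need be no uniform a priori bound on the depth of proofs of a fixed $\Gamma$ over $\mathcal{C}$ (cuts on provable boxed formulas lying in $\mathcal{C}$ can pad a proof arbitrarily deep while each new modal cut is pushed deeper in lockstep). Termination must therefore rest on a well-founded measure tailored to the reduction of Lemma~\ref{upmodal cut} rather than on a crude depth count. The ingredients I would rely on are that each modal cut produced by one reduction step lies strictly deeper than the one it replaces and is controlled by the height of the left premise of the eliminated cut, and --- crucially --- that annotated proofs forbid an annotated $\mathsf{eucl}$, so annotated diamonds can only be pushed away from the root and never migrate upward to spawn fresh shallow modal cuts. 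Showing that these features assemble into a measure that strictly decreases under the reduction, and thereby pinning down the value of $d$, is the real content of the argument.
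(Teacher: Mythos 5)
Your proposal reconstructs the wrong part of the paper's argument: the iteration of Lemma~\ref{upmodal cut} that you describe in your first two paragraphs is exactly what the paper does \emph{after} this lemma, in Corollary~\ref{Lob}, and the point where you stop --- ``showing that these features assemble into a measure that strictly decreases \dots is the real content of the argument'' --- is precisely the statement you were asked to prove. This lemma \emph{is} the termination step that breaks the regress you worry about: it asserts that for a concrete, explicitly computable $d$ (depending only on $\Gamma$ and $\mathcal{C}$), every application of $\boxplus\text{-}\mathsf{cut}_d(\mathcal{C})$ can be removed outright, with no further pushing-deeper. Your sketched ingredients (new cuts lie strictly deeper, they are controlled by the height of the left premise, annotated proofs forbid annotated $\mathsf{eucl}$) are not assembled into any well-founded measure, and it is not apparent that they can be; so the proposal leaves the lemma unproved.

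The missing idea is combinatorial rather than a reduction measure. The paper takes $d = h(\Gamma) + l(\Gamma,\mathcal{C})$, where $h(\Gamma)$ is the depth of the tree of $\Gamma$ and $l(\Gamma,\mathcal{C})$ bounds the length of chains in a partial order on subsets of $\mathcal{B}(\Gamma,\mathcal{C})$, the (finite) set of box formulas occurring in $\mathcal{A}(\Gamma)\cup\mathcal{C}$. If an application of $\boxplus\text{-}\mathsf{cut}_d(\mathcal{C})$ occurs in the proof $\pi$, its principal node sits at depth at least $d$, so the path from the root to it passes through at least $l(\Gamma,\mathcal{C})$ nodes lying deeper than anything in $\Gamma$; each such node must have been created by an application of the L\"{o}b rule $\Box$, and hence carries the residue $\Diamond_{j_k}\overline{B}_k$ of an introduced formula $\Box_{j_k}B_k \in \mathcal{B}(\Gamma,\mathcal{C})$. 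A pigeonhole argument via the chain bound produces two of these nodes, at positions $u<t$ on the path, introducing the \emph{same} formula $\Box_{j_u}B_u = \Box_{j_t}B_t$ with a $j_u$-path between them. Then the subproof $\pi_0$ of $\pi$ ending with the $\Box$ application that introduces the deeper occurrence $\Box_{j_t}B_t$ --- a subproof containing the offending cut --- is discarded wholesale: its end-sequent $\Sigma\{\Box_{j_t}B_t\}$ has a cut-free proof, obtained from the generalized axiom $\Sigma^\prime\{[\Box_{j_t}B_t, \Diamond_{j_t}\overline{B}_t]_{j_t}\}$ (Lemma~\ref{GenAx}) by applications of $\mathsf{tran}$ linking it to the copy of $\Diamond_{j_t}\overline{B}_t$ left at the shallower node by the earlier $\Box$ application. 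Repeating this excision for each application of $\boxplus\text{-}\mathsf{cut}_d(\mathcal{C})$ yields the ordinary proof; no iteration over increasing $d$ and no termination measure is needed. Without this repetition-and-excision argument (or something equivalent), your proof does not go through.
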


\begin{proof}



For the given sequent $\Gamma$, let $\mathcal{A}(\Gamma)$ denote the smallest adequate set containing all formulas from $\Gamma$ and $h (\Gamma)$ denote the length of the longest branch in the corresponding tree of $\Gamma$. Put $\mathcal{B} (\Gamma, \mathcal{C}) := \{ \Box_i B \mid \Box_i B \in \mathcal{A}(\Delta) \cup \mathcal{C}, i \in \omega \}$. 

We define the strict partial order on the set of subsets of $\mathcal{B} (\Gamma, \mathcal{C})$.
Let $S_1 \prec S_2$ if there exists a natural number $j$ such that
\begin{itemize}
\item for any formula $A$ and any $i<j$ \[\Box_i A \in S_1 \Longleftrightarrow \Box_i A \in S_2;\]
\item for any formula $A$ \[\Box_j A \in S_1 \Longrightarrow \Box_j A \in S_2;\]
\item there exists a formula $\Box_j A$ such that \[\Box_j A \in S_2, \qquad \Box_j A \nin S_1.\]
\end{itemize}
Denote by $l(\Gamma, \mathcal{C})$ the size of the largest chain in the given partial order. It can be shown that $l(\Gamma, \mathcal{C}) = \prod^{m}_{i=0} (\lvert \mathcal{B}_i (\Gamma, \mathcal{C}) \rvert + 1)$, where $\mathcal{B}_i (\Gamma, \mathcal{C}) := \{ \Box_i B \mid \Box_i B \in \mathcal{A}(\Gamma) \cup \mathcal{C} \}$ and $m$ is the number of the maximal modality that occurs in $\mathcal{A}(\Gamma) \cup \mathcal{C}$.  

Now assume $\pi$ is an annotated proof of $\Gamma$ in $\mathsf{GLP_{NS}} + \mathsf{cut}(\mathcal{C}) + \boxplus\text{-}\mathsf{cut}_d(\mathcal{C})$, where $d=h(\Gamma) + l(\Gamma, \mathcal{C}) $. 
Consider any application of $\boxplus\text{-}\mathsf{cut}_d(\mathcal{C})$ in $\pi$
\begin{gather}
\label{App} 
\AXC{$\Delta \{\diamondtimes_i \overline{A} \} \{ \diamondtimes_i \overline{A}\}^n$}
\AXC{$\Delta \{\Box_i A \} \{ \emptyset\}^n$} 
\LeftLabel{$\boxplus\text{-}\mathsf{cut}_d(\mathcal{C})$}
\RightLabel{$( A \in \mathcal{C})$.}
\BIC{$\Delta \{\emptyset\} \{ \emptyset\}^n$}
\DisplayProof
\end{gather}
We will find a subproof $\pi_0$ of $\pi$ that contains this application of $\boxplus\text{-}\mathsf{cut}_d(\mathcal{C})$ and replace $\pi_0$ by a cut-free proof of the same sequent. Hence, by repeating this procedure for all other applications of $\boxplus\text{-}\mathsf{cut}_d(\mathcal{C})$, we will obtain an ordinary proof of $\Gamma$ in $\mathsf{GLP_{NS}} + \mathsf{cut}(\mathcal{C})$.   

Denote by $b$ the node of the first hole in the corresponding tree of $\Delta \{\emptyset\} \{ \emptyset\}^n$.
Let 
$$ r \rightarrow a_1 \rightarrow \dotso \rightarrow a_{h(\Gamma)} \rightarrow_{j_1} a_{h(\Gamma)+1} \rightarrow_{j_2}\dotso \to_{j_{l(\Gamma, \mathcal{C})}} a_{h(\Gamma)+ l(\Gamma, \mathcal{C})} \to \dotso \to b , $$
be the path in the corresponding tree of $\Delta \{\emptyset\} \{ \emptyset\}^n$ connecting the root $r$ and the node $b$. Notice that nodes $a_{h(\Gamma)+1}, a_{h(\Gamma)+2}, \dotso, a_{h(\Gamma)+ l(\Gamma, \mathcal{C})}$ occur deeper than all nodes in $\Gamma$. Thus, in the proof $\pi$ we can trace these nodes downwards via their descendants to the corresponding modal formulas introduced by the rule $\Box$. Let the node $a_{h(\Gamma)+k}$ is used to introduce the formula $\Box_{j_{k}} B_k$. Then the node $a_{h(\Gamma)+k}$ contains the formula $\Diamond_{j_{k}} \overline{B}_k $ from the definition of the rule $\Box$.

We claim that there exists a pair of different nodes $a_{h(\Gamma)+u}$ and $a_{h(\Gamma)+t}$ such that $0< u < t \leqslant l(\Gamma, \mathcal{C})$, $\Box_{j_{u}} B_u = \Box_{j_{t}} B_t$ and the path   
$$ a_{h(\Gamma)+u} \rightarrow_{j_{u+1}} \dotso \to_{j_{t-1}} a_{h(\Gamma)+t-1} $$
is a $j_u$-path, i.e. all arrows of these path are marked by natural numbers greater or equal than $j_u$.
For $i= 1,\dotsc, l(\Gamma, \mathcal{C})$, define $$S_{i} := \{ \Box_{j_{k}} B_k \mid k\leqslant i \text{ and the path from $a_{h(\Gamma)+k}$ to $a_{h(\Gamma)+i}$ is a $j_k$-path }\}.$$ 
Notice that $S_i$ are subsets of $\mathcal{B} (\Gamma, \mathcal{C})$ and if $\Box_{j_{k}} B_{k} \nin S_{k-1}$, then $S_{k-1} \prec S_{k}$.  
Suppose $\Box_{j_{k}} B_{k} \nin S_{k-1}$ for all $k=2, \dotsc, l(\Gamma, \mathcal{C})$. Then we see that $S_1 = \{ \Box_{j_1} B_1\}$ and there exists a chain 
\[\emptyset \prec S_1 \prec \dotso \prec S_{l(\Gamma, \mathcal{C})}\] 
of the size greater than $l(\Gamma, \mathcal{C})$, a contradiction with the definition of $l(\Gamma, \mathcal{C})$.
Hence, we have $\Box_{j_{t}} B_{t} \in S_{t-1}$ for some $t$, where $2 \leqslant t\leqslant l(\Gamma, \mathcal{C}) $.
By definition of $S_{t-1}$, there is a node $a_{h(\Gamma)+u}$ such that $0< u < t \leqslant l(\Gamma, \mathcal{C})$, $\Box_{j_{u}} B_u = \Box_{j_{t}} B_t$ and the path   
$$ a_{h(\Gamma)+u} \rightarrow_{j_{u+1}} \dotso \to_{j_{t-1}} a_{h(\Gamma)+t-1} $$
is a $j_u$-path

Now consider the application of the modal rule $\Box$ in $\pi$, where the descendant of $a_{h(\Gamma)+t}$ is used to introduce the formula  $\Box_{j_{t}} B_t$,
\[
\AXC{$\pi_0$}
\noLine
\UIC{\vdots}
\noLine
\UIC{$\Sigma \{[B_t, \Diamond_{j_{t}} \overline{B}_k ]_{j_{t}}  \}$} 
\LeftLabel{$\Box$}
\RightLabel{ .}
\UIC{$\Sigma \{ \Box_{j_{t}} B_t  \} $}
\DisplayProof 
 \]
Note that the application \eqref{App} of $\boxplus\text{-}\mathsf{cut}_d(\mathcal{C})$ occurs in $\pi_0$. Recall that the descendant of $a_{h(\Gamma)+u}$ in the corresponding tree of $\Sigma \{ \Box_{j_{t}} B_t  \} $ contains the formula $\Diamond_{j_{u}} \overline{B}_u$ and $\Diamond_{j_{u}} \overline{B}_u = \Diamond_{j_{t}} \overline{B}_t$. Consider the proof
\begin{gather}\label{cut-free proof}
\AXC{$\mu$}
\noLine
\UIC{\vdots}
\noLine
\UIC{$\Sigma^\prime \{[\Box_{j_{t}} B_t, \Diamond_{j_{t}} \overline{B}_t ]_{j_{t}}  \}$} 
\doubleLine
\LeftLabel{$\mathsf{tran}^\star$}
\RightLabel{ ,}
\UIC{$\Sigma \{ \Box_{j_{t}} B_t  \} $}
\DisplayProof 
\end{gather}
where $\mu$ is a cut-free proof of $\Sigma^\prime \{[\Box_{j_{t}} B_t, \Diamond_{j_{t}} \overline{B}_t ]_{j_{t}}  \}$ and $\Sigma \{ \Box_{j_{t}} B_t  \} $ is obtained from $\Sigma^\prime \{[\Box_{j_{t}} B_t, \Diamond_{j_{t}} \overline{B}_t ]_{j_{t}}  \}$ by applying the rule $\mathsf{tran}$ to the formula $\Diamond_{j_{t}} \overline{B}_t $ along the path of descendants of $a_{h(\Gamma)+u}, \dotso, a_{h(\Gamma)+t-1}$ in the corresponding tree of $\Sigma \{ \Box_{j_{t}} B_t  \} $. Now we can replace the subproof $\pi_0$ of $\pi$ by the cut-free proof \eqref{cut-free proof}. By repeating this procedure for all applications of $\boxplus\text{-}\mathsf{cut}_d(\mathcal{C})$ in $\pi$, we obtain an ordinary proof of $\Gamma$ in $\mathsf{GLP_{NS}} + \mathsf{cut}(\mathcal{C})$.

\end{proof}


\begin{cor} \label{Lob}
If there is an annotated proof of $\Gamma$ in $\mathsf{GLP_{NS}} + \mathsf{cut}(\mathcal{C}) + \boxplus\text{-}\mathsf{cut}(\mathcal{C})$, then there is an ordinary proof of $\Gamma$ in $\mathsf{GLP_{NS}} + \mathsf{cut}(\mathcal{C})$.
\end{cor}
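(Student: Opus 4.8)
The plan is to drive every application of $\boxplus\text{-}\mathsf{cut}(\mathcal{C})$ down to a sufficiently large depth and then invoke Lemma \ref{sasaki lemma 2} to remove them all at once. The engine for increasing depth is the following \emph{one-step} claim: for each $d$, every annotated proof in $\mathsf{GLP_{NS}} + \mathsf{cut}(\mathcal{C}) + \boxplus\text{-}\mathsf{cut}_d(\mathcal{C})$ can be transformed into an annotated proof of the same sequent in $\mathsf{GLP_{NS}} + \mathsf{cut}(\mathcal{C}) + \boxplus\text{-}\mathsf{cut}_{d+1}(\mathcal{C})$. Granting this, the corollary follows quickly: starting from an annotated proof of $\Gamma$ in $\mathsf{GLP_{NS}} + \mathsf{cut}(\mathcal{C}) + \boxplus\text{-}\mathsf{cut}(\mathcal{C})$ and recalling that $\boxplus\text{-}\mathsf{cut}(\mathcal{C})$ is exactly $\boxplus\text{-}\mathsf{cut}_0(\mathcal{C})$, I would fix the number $d$ supplied by Lemma \ref{sasaki lemma 2} for $\Gamma$ and $\mathcal{C}$, apply the one-step claim $d$ times to obtain an annotated proof of $\Gamma$ in $\mathsf{GLP_{NS}} + \mathsf{cut}(\mathcal{C}) + \boxplus\text{-}\mathsf{cut}_d(\mathcal{C})$, and then appeal to Lemma \ref{sasaki lemma 2} to produce an ordinary proof of $\Gamma$ in $\mathsf{GLP_{NS}} + \mathsf{cut}(\mathcal{C})$.

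To prove the one-step claim I would call an application of $\boxplus\text{-}\mathsf{cut}_d(\mathcal{C})$ \emph{critical} if its principal position lies at depth exactly $d$; note that every non-critical application is already an instance of $\boxplus\text{-}\mathsf{cut}_{d+1}(\mathcal{C})$, so it suffices to remove the critical ones. I would argue by induction on the number $N$ of critical applications occurring in the given annotated proof. If $N = 0$ there is nothing to do. If $N > 0$, I would choose a \emph{topmost} critical application $\rho$, that is, one having no critical application inside the subderivations of its two premises. By this choice both premises of $\rho$ are annotated proofs already living in $\mathsf{GLP_{NS}} + \mathsf{cut}(\mathcal{C}) + \boxplus\text{-}\mathsf{cut}_{d+1}(\mathcal{C})$, which is precisely the hypothesis required by Lemma \ref{sasaki lemma 1}. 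That lemma then furnishes an annotated proof of the conclusion of $\rho$ in the same system $\mathsf{GLP_{NS}} + \mathsf{cut}(\mathcal{C}) + \boxplus\text{-}\mathsf{cut}_{d+1}(\mathcal{C})$. Splicing this proof in place of the subderivation ending at $\rho$ leaves the portion of the proof below $\rho$ literally unchanged, so all depths there are preserved; the replacement contributes only applications at depth at least $d+1$, hence no new critical application, and the total number of critical applications drops from $N$ to $N-1$. The induction hypothesis then completes the argument.

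The one genuinely delicate point is the verification that splicing does not disturb the depth bookkeeping. This is why the induction is organised around \emph{topmost} critical cuts: only then are the premises free of critical applications and thus legitimate inputs to Lemma \ref{sasaki lemma 1}, and, since the spliced proof replaces a subderivation by another derivation of the \emph{same} end-sequent, the rest of the proof together with the depths of all its $\boxplus$-cuts is untouched. Because depth is an intrinsic property of each sequent tree, the bound $d+1$ obtained inside the spliced proof survives the splicing verbatim. Once the one-step claim is in hand, the finite iteration of the claim up to the bound $d$ from Lemma \ref{sasaki lemma 2}, followed by the final cut-removal, is routine.
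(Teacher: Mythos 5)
Your proposal is correct and follows essentially the same route as the paper: the paper's own proof likewise iterates Lemma \ref{sasaki lemma 1} to push every $\boxplus$-cut down to at least the depth $d$ supplied by Lemma \ref{sasaki lemma 2} (starting from $\boxplus\text{-}\mathsf{cut}(\mathcal{C}) = \boxplus\text{-}\mathsf{cut}_0(\mathcal{C})$), and then invokes Lemma \ref{sasaki lemma 2} to obtain an ordinary proof. The only difference is that you make explicit the induction (on the number of depth-exactly-$d$ applications, replacing topmost ones first) which the paper compresses into the single sentence ``by Lemma \ref{sasaki lemma 1}, for all $d \geqslant d_0$ we have $\mathsf{GLP_{NS}} + \mathsf{cut}(\mathcal{C}) + \boxplus\text{-}\mathsf{cut}_{d}(\mathcal{C}) \vdash \Gamma$''.
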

\begin{proof}
Suppose $\mathsf{GLP_{NS}} + \mathsf{cut}(\mathcal{C}) + \boxplus\text{-}\mathsf{cut}(\mathcal{C}) \vdash \Gamma$. Then there exists a natural number $d_0$ such that $\mathsf{GLP_{NS}} + \mathsf{cut}(\mathcal{C}) + \boxplus\text{-}\mathsf{cut}_{d_0}(\mathcal{C}) \vdash \Gamma$. By Lemma \ref{sasaki lemma 1},  for all $d \geqslant d_0$ we have $\mathsf{GLP_{NS}} + \mathsf{cut}(\mathcal{C}) + \boxplus\text{-}\mathsf{cut}_{d}(\mathcal{C}) \vdash \Gamma$. Applying Lemma \ref{sasaki lemma 2}, we obtain $\mathsf{GLP_{NS}} + \mathsf{cut}(\mathcal{C}) \vdash \Gamma$.  
\end{proof}

From the previous corollary and Lemma \ref{reduction lemma}, we have:

\begin{cor} \label{reduction lemma 2} 
The rule $\mathsf{cut}(A)$ is admissible for $\mathsf{GLP_{NS}} + \mathsf{cut}(\mathcal{C}_A) $.
\end{cor}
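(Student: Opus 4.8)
The goal is to show that the single-formula cut rule $\mathsf{cut}(A)$ is admissible for $\mathsf{GLP_{NS}} + \mathsf{cut}(\mathcal{C}_A)$, where $\mathcal{C}_A$ is the adequate set of proper subformulas of $A$ and their negations. I would proceed by combining the two results explicitly mentioned: Lemma \ref{reduction lemma} and Corollary \ref{Lob}. The plan is to take an instance of $\mathsf{cut}(A)$ whose premises $\Gamma \{A\}$ and $\Gamma \{\overline{A}\}$ are provable in $\mathsf{GLP_{NS}} + \mathsf{cut}(\mathcal{C}_A)$ and produce a proof of the conclusion $\Gamma \{\emptyset\}$ in that same calculus, thereby witnessing admissibility.

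First I would apply Lemma \ref{reduction lemma} directly to this instance. That lemma takes precisely such a pair of ordinary proofs $\pi_1$ of $\Gamma\{A\}$ and $\pi_2$ of $\Gamma\{\overline{A}\}$ in $\mathsf{GLP_{NS}} + \mathsf{cut}(\mathcal{C}_A)$ and yields an annotated proof of $\Gamma\{\emptyset\}$ in the extended calculus $\mathsf{GLP_{NS}} + \mathsf{cut}(\mathcal{C}_A) + \boxplus\text{-}\mathsf{cut}(\mathcal{C}_A)$. So after this step I have a proof of the desired conclusion, but at the cost of having introduced annotations and the auxiliary rule $\boxplus\text{-}\mathsf{cut}(\mathcal{C}_A)$.

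The remaining task is to discharge both of these. Here I would invoke Corollary \ref{Lob} with $\mathcal{C} := \mathcal{C}_A$: it converts any annotated proof in $\mathsf{GLP_{NS}} + \mathsf{cut}(\mathcal{C}_A) + \boxplus\text{-}\mathsf{cut}(\mathcal{C}_A)$ into an ordinary proof in $\mathsf{GLP_{NS}} + \mathsf{cut}(\mathcal{C}_A)$. Applying it to the annotated proof obtained above gives an ordinary proof of $\Gamma\{\emptyset\}$ in $\mathsf{GLP_{NS}} + \mathsf{cut}(\mathcal{C}_A)$, which is exactly what admissibility of $\mathsf{cut}(A)$ requires. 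Since the instance of $\mathsf{cut}(A)$ was arbitrary, the rule is admissible.

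In this formulation there is essentially no obstacle left at the level of the corollary itself: all the genuine work has already been done in the lemmata it builds on. The conceptual difficulty is concentrated in Lemma \ref{reduction lemma} (reducing a cut on $A$ to a $\Box$-cut, which feeds into the modal-cut machinery of Lemma \ref{modal cut}) and in the two Sasaki-style lemmata underlying Corollary \ref{Lob}, where the interplay between the depth bound $d$, the chain argument on subsets of $\mathcal{B}(\Gamma,\mathcal{C})$, and the use of the transitivity rule $\mathsf{tran}$ to eliminate the deepest $\boxplus\text{-}\mathsf{cut}$ applications is the crux. For the corollary at hand, I would simply chain these two results together, taking care that the side condition $A \in \mathcal{C}_A$ is not required: the cut on $A$ sits at the top, while all the admitted cuts live on the strictly smaller formulas in $\mathcal{C}_A$.
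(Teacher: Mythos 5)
Your proposal is correct and follows essentially the same route as the paper, which obtains the corollary precisely by chaining Lemma \ref{reduction lemma} (producing an annotated proof of $\Gamma\{\emptyset\}$ in $\mathsf{GLP_{NS}} + \mathsf{cut}(\mathcal{C}_A) + \boxplus\text{-}\mathsf{cut}(\mathcal{C}_A)$) with Corollary \ref{Lob} instantiated at $\mathcal{C} = \mathcal{C}_A$ (discharging the annotations and the $\boxplus\text{-}\mathsf{cut}$ rule). The paper states this composition without further detail, and your write-up, including the observation that the cut formula $A$ itself need not lie in $\mathcal{C}_A$, fills in exactly the intended steps.
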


We now in a position to prove the cut elimination theorem.   
\begin{thm}[Cut Elimination] \label{cut elim}
If $\mathsf{GLP_{NS} + cut}\vdash \Gamma$, then $\mathsf{GLP_{NS}}\vdash \Gamma$.
\end{thm}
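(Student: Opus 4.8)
The plan is to prove cut elimination by reducing full cuts to the restricted cuts already handled, then iterating from the top of a proof downwards. First I would observe that $\mathsf{cut}$ is simply the union of the rules $\mathsf{cut}(\mathcal{C}_A)$ ranging over cut formulas $A$, so it suffices to show that every individual application of $\mathsf{cut}(A)$ can be removed, given that all cuts on \emph{proper} subformulas of $A$ (and their negations, i.e.\ members of $\mathcal{C}_A$) are already admissible. This is exactly the content of Corollary~\ref{reduction lemma 2}: the rule $\mathsf{cut}(A)$ is admissible for $\mathsf{GLP_{NS}} + \mathsf{cut}(\mathcal{C}_A)$. The overall strategy is therefore an induction on the complexity $\lvert A \rvert$ of the cut formula, using Corollary~\ref{reduction lemma 2} as the inductive step.

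The key steps, in order, are as follows. I would take a proof $\pi$ of $\Gamma$ in $\mathsf{GLP_{NS}} + \mathsf{cut}$ and consider a topmost application of $\mathsf{cut}$ whose cut formula $A$ has maximal complexity among all cuts in $\pi$; more carefully, I would induct on the multiset of complexities of the cut formulas occurring in $\pi$ under the multiset ordering. Fix an uppermost cut of maximal complexity $A$. Since it is uppermost among maximal ones, the subproofs $\pi_1$ and $\pi_2$ of its two premises $\Gamma\{A\}$ and $\Gamma\{\overline{A}\}$ contain only cuts on formulas of strictly smaller complexity, which all lie in $\mathcal{C}_A$. Hence $\pi_1$ and $\pi_2$ are proofs in $\mathsf{GLP_{NS}} + \mathsf{cut}(\mathcal{C}_A)$, and Corollary~\ref{reduction lemma 2} applies to yield a proof of $\Gamma\{\emptyset\}$ in $\mathsf{GLP_{NS}} + \mathsf{cut}(\mathcal{C}_A)$. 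Splicing this back into $\pi$ replaces one maximal-complexity cut by finitely many cuts of strictly smaller complexity, so the multiset measure strictly decreases. Iterating, all cuts are eventually eliminated and we obtain a proof in $\mathsf{GLP_{NS}}$.

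The main obstacle is not in this final theorem itself but in verifying that the measure genuinely decreases, which requires care about what Corollary~\ref{reduction lemma 2} produces. The replacement proof uses only cuts in $\mathsf{cut}(\mathcal{C}_A)$, and every formula in $\mathcal{C}_A$ is either a proper subformula of $A$ or the negation of one; since $\lvert \overline{B}\rvert = \lvert B\rvert$ and proper subformulas have strictly smaller complexity, each new cut formula satisfies $\lvert B\rvert < \lvert A\rvert$. Thus one application of $\mathsf{cut}(A)$ is traded for a bounded collection of strictly lower-complexity cuts, and the multiset ordering on complexities is well-founded, so the iteration terminates. The only subtlety worth flagging is that Corollary~\ref{reduction lemma 2} already internalizes the hard modal work—Lemmata~\ref{modal cut} and \ref{sasaki lemma 2} and the passage through $\boxplus\text{-}\mathsf{cut}$—so at the level of the theorem the argument is purely a bookkeeping induction on cut-formula complexity with the genuine difficulty deferred to the preceding lemmas.
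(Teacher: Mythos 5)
Your overall strategy (reduce everything to Corollary~\ref{reduction lemma 2} and induct on cut-formula complexity) is the paper's, but there is a genuine gap at the step where you invoke the corollary. You fix an uppermost cut of \emph{maximal} complexity $A$ and claim that the cuts occurring in its subproofs $\pi_1,\pi_2$, being of strictly smaller complexity, ``all lie in $\mathcal{C}_A$''. That is false: $\mathcal{C}_A$ is the set of \emph{proper subformulas} of $A$ and their negations, not the set of all formulas of complexity less than $\lvert A\rvert$. For instance, if $A=\Box_0\Box_0 p$, a cut above on $q\vee r$ has complexity $2<3=\lvert A\rvert$, yet $q\vee r\notin\mathcal{C}_A=\{\Box_0 p,\Diamond_0\overline{p},p,\overline{p}\}$. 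Hence $\pi_1$ and $\pi_2$ need not be proofs in $\mathsf{GLP_{NS}}+\mathsf{cut}(\mathcal{C}_A)$, and Corollary~\ref{reduction lemma 2} --- whose hypothesis, inherited from Lemma~\ref{reduction lemma}, is precisely that the two premises are provable in $\mathsf{GLP_{NS}}+\mathsf{cut}(\mathcal{C}_A)$ --- cannot be applied to this cut. The splicing step therefore fails as written.

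The repair is small and brings you back to the paper's argument. Either select a \emph{topmost} cut (one whose premises are cut-free): then $\pi_1,\pi_2$ are proofs in $\mathsf{GLP_{NS}}$, hence trivially in $\mathsf{GLP_{NS}}+\mathsf{cut}(\mathcal{C}_A)$, the corollary yields a proof of the conclusion whose cuts are all on formulas of $\mathcal{C}_A$, each of complexity $<\lvert A\rvert$ (negation preserves complexity), and your multiset measure strictly decreases. Or, keeping your choice of cut, first apply the multiset induction hypothesis to $\pi_1$ and $\pi_2$ separately to render them cut-free, and only then invoke the corollary. The paper's proof is essentially the first option: it shows, by induction on $\lvert A\rvert$, that a single cut with \emph{cut-free} premises is eliminable, Corollary~\ref{reduction lemma 2} supplying the reduction to cuts on formulas of $\mathcal{C}_A$; the iteration over a whole proof (topmost cuts first) is left implicit there, and your multiset ordering is a reasonable way to make that bookkeeping explicit --- once the premise condition is stated correctly.
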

\begin{proof}
Assume we have
\[  
\AXC{$\pi_1$}
\noLine
\UIC{\vdots}
\noLine
\UIC{$\Sigma \{ A\}$}
\AXC{$\pi_2$}
\noLine
\UIC{\vdots}
\noLine
\UIC{$\Sigma \{ \overline{A}\}$} 
\LeftLabel{$\mathsf{cut}$}
\RightLabel{ ,}
\BIC{$\Sigma \{\emptyset\} $}
\DisplayProof
 \]
where $\pi_1$ and $\pi_2$ are proofs in $\mathsf{GLP_{NS}}$. 
By induction on $\rvert A \lvert $, we prove $\mathsf{GLP_{NS}}  \vdash \Sigma \{\emptyset\} $. From Corollary \ref{reduction lemma}, we have $\mathsf{GLP_{NS}} + \mathsf{cut}(\mathcal{C}_A)   \vdash \Sigma \{\emptyset\}$. For any formula $B$ from $\mathcal{C}_A$, we have $\rvert B \lvert < \rvert A \lvert$. Thus, the induction hypothesis implies $\mathsf{GLP_{NS}}    \vdash \Sigma \{\emptyset\}$.

\end{proof}
\section{An application}
In the present section we establish the reduction of $\mathsf{GLP}$ to its fragment called $\mathsf{J}$ via the cut elimination theorem.
  
Recall that the logic $\mathsf{J}$ is a fragment of $\mathsf{GLP}$ obtained by replacing axiom (v) by the following two axioms derivable in $\mathsf{GLP}$:
\begin{itemize}
\item[(vi)] $\Box_i A \rightarrow \Box_j \Box_i A$ for $i \leqslant j$;
\item[(vii)] $\Box_i A \rightarrow \Box_i \Box_j A$ for $i \leqslant j$.
\end{itemize}

By $m(A)$, denote the number of the maximal modality that occurs in $A$. If $A$ does not contain any modality, then we put $m(A)= -1$.
For a given formula $A$ let 
\[ M(A):= \bigwedge_{k<s} \bigwedge_{i_k < j \leqslant m(A)} (\Box_{i_k} A_k \rightarrow \Box_{j} A_k  ), \]
where $\Box_{i_k} A_k$ for $k<s$ are all subformulas of $A$ (and of $\overline{A}$) of the form $\Box_i B$. Set
\[
M^+(A):= M(A) \wedge \bigwedge_{i \leqslant m(A)} \Box_i M(A).\]
\begin{thm} \label{J-GLP}
$\mathsf{J} \vdash M^{+} (A) \rightarrow  A \Longleftrightarrow \mathsf{GLP} \vdash A$.
\end{thm}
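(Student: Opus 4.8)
The plan is to prove the two directions separately, concentrating the work in the implication from $\mathsf{GLP}$ to $\mathsf{J}$, where cut elimination (Theorem~\ref{cut elim}) is the essential tool. The direction $\mathsf{J} \vdash M^{+}(A) \rightarrow A \Rightarrow \mathsf{GLP} \vdash A$ is the easy one: each conjunct of $M(A)$ is an instance of axiom (v), hence $\mathsf{GLP}$-provable, and each conjunct $\Box_i M(A)$ follows by necessitation, so $\mathsf{GLP} \vdash M^{+}(A)$. Since (vi) and (vii) are derivable in $\mathsf{GLP}$ and all remaining axioms are shared, every $\mathsf{J}$-theorem is a $\mathsf{GLP}$-theorem; thus $\mathsf{GLP} \vdash M^{+}(A) \rightarrow A$, and modus ponens yields $\mathsf{GLP} \vdash A$.

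For the converse, assume $\mathsf{GLP} \vdash A$. By the Proposition relating $\mathsf{GLP}$ with $\mathsf{GLP_{NS}} + \mathsf{cut}$ and by Theorem~\ref{cut elim}, there is a cut-free proof $\pi$ of the one-formula sequent $A$ in $\mathsf{GLP_{NS}}$ (note $A^{\sharp} = A$). Cut-freeness gives the subformula property: every formula occurring in $\pi$ lies in the adequate set $\mathcal{A}(A)$ (the rule $\mathsf{\Box}$ only introduces $\Diamond_i \overline{B} = \overline{\Box_i B}$, again in $\mathcal{A}(A)$), and consequently every modal index occurring in $\pi$ is $\leqslant m(A)$. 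The only rule of $\mathsf{GLP_{NS}}$ whose soundness genuinely needs axiom (v) rather than the $\mathsf{J}$-axioms is $\mathsf{\Diamond}$ in the strict case $i < j$, where removing $\Diamond_i B$ past a box $[\;]_j$ requires $\Box_i \overline{B} \rightarrow \Box_j \overline{B}$; by contrast $\mathsf{tran}$ and $\mathsf{eucl}$ are validated in $\mathsf{J}$ by (vi) and (iv), and $\mathsf{\Box}$ by L\"{o}b. Crucially, for each such application the required instance $\Box_i \overline{B} \rightarrow \Box_j \overline{B}$ (with $i < j \leqslant m(A)$ and $\Box_i \overline{B} \in \mathcal{A}(A)$) is exactly a conjunct of $M(A)$.

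To turn $\pi$ into a $\mathsf{J}$-derivation I would introduce an $M^{+}$-relativization $\Gamma^{\ast}$ of sequents, leaving displayed formulas unchanged and replacing each nested box $[\Delta]_i$ by $\Box_i(M^{+}(A) \rightarrow \Delta^{\ast})$, so that $M^{+}(A)$ is available as a hypothesis at every node of the tree. The engine is the self-propagation lemma $\mathsf{J} \vdash M^{+}(A) \rightarrow \Box_i M^{+}(A)$ for all $i \leqslant m(A)$, proved from (vi) and (vii): the conjunct $\Box_i M(A)$ is present, while $\Box_i \Box_j M(A)$ follows from $\Box_i M(A)$ via (vii) when $i \leqslant j$ and from $\Box_j M(A)$ via (vi) when $j < i$. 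Using this lemma together with normality (K and necessitation) to lift local implications through boxes, I would prove by induction on $\pi$ that $\mathsf{J} \vdash M^{+}(A) \rightarrow \Gamma^{\ast}$ for every sequent $\Gamma$ occurring in $\pi$: the boolean and structural cases are routine; the $\mathsf{\Box}$-case combines L\"{o}b with $\Box_i M^{+}(A)$ supplied by self-propagation; and the decisive $\mathsf{\Diamond}$-case with $i < j$ is discharged precisely by the conjunct $\Box_i \overline{B} \rightarrow \Box_j \overline{B}$ of $M(A)$, now available at the relevant node. Since the root of $\pi$ is the single formula $A$ and carries no nested box, $\Gamma^{\ast} = A$ there, and the induction delivers $\mathsf{J} \vdash M^{+}(A) \rightarrow A$.

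The main obstacle is the bookkeeping of the relativization together with the self-propagation lemma: one must ensure that $M^{+}(A)$ genuinely persists at every depth reached by $\pi$, which is exactly where the $\mathsf{J}$-axioms (vi) and (vii) do the work that (v) does in $\mathsf{GLP}$, and one must invoke the subformula property to guarantee that all modal indices stay $\leqslant m(A)$ so that the finitely many conjuncts of $M^{+}(A)$ suffice. Verifying that each relativized rule instance is $\mathsf{J}$-derivable uniformly in the depth of its principal position is the technical heart of the argument.
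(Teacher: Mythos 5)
Your proposal is correct, and its skeleton matches the paper's: the easy direction via $\mathsf{GLP} \vdash M^{+}(A)$, and the hard direction by combining cut elimination (Theorem~\ref{cut elim}) with a relativization that makes the $M^{+}$-material available at every node of the nested-sequent tree, the rule $\mathsf{\Diamond}$ with $i<j$ being the one place where a conjunct of $M(A)$ is genuinely consumed. Where you diverge is the technical decomposition: the paper never works in Hilbert-style $\mathsf{J}$ directly. It introduces a second nested-sequent calculus $\mathsf{J_{NS}}$ (with $\mathsf{J}$-sound rules $\mathsf{\Box}$, $\mathsf{\Diamond^\prime}$, $\mathsf{tran}$, $\mathsf{tran^\prime}$, $\mathsf{eucl}$), defines $\Gamma^{\ast}$ by adding to every node the multiset $W^{+}(\Gamma)$ of \emph{negated} conjuncts of $M^{+}$ --- the disjunctive rendering of your implicational guard $\Box_i(M^{+}(A) \rightarrow \Delta^{\ast})$ --- and proves by induction on the cut-free $\mathsf{GLP_{NS}}$-proof that $\mathsf{J_{NS}} + \mathsf{weak} + \mathsf{cont} \vdash \Gamma^{\ast}$ (Lemma~\ref{GLP-J-Seq}); the passage to $\mathsf{J} \vdash M^{+}(A) \rightarrow A$ is then delegated to a soundness lemma for $\mathsf{J_{NS}}$. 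Your two key devices correspond exactly to moves in that proof: your self-propagation lemma $\mathsf{J} \vdash M^{+}(A) \rightarrow \Box_i M^{+}(A)$, obtained from (vi) and (vii), does the work that the rules $\mathsf{\Diamond^\prime}$, $\mathsf{tran}$, $\mathsf{tran^\prime}$ do in the paper's $\mathsf{\Box}$-case when pushing $W^{+}(\Gamma)$ into the freshly created node, and your appeal to the conjunct $\Box_i \overline{B} \rightarrow \Box_j \overline{B}$ in the strict $\mathsf{\Diamond}$-case corresponds to the paper's contraction against the formula $\Box_i \overline{B} \wedge \Diamond_j B \in W(\Gamma)$. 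The paper's route buys modularity: all bookkeeping stays at the sequent level, where weakening and contraction are cheap, and the lifting of local inferences through boxes is absorbed into the soundness of $\mathsf{J_{NS}}$. Your route buys economy: no auxiliary calculus must be defined and justified, at the price of carrying out the deep-inference lifting (necessitation plus K at every depth) and the persistence of $M^{+}(A)$ explicitly in Hilbert style --- which is indeed, as you say, the technical heart, but it goes through: your subformula-property argument correctly guarantees that all box labels and all principal $\Diamond$-formulas in the cut-free proof stay within $\mathcal{A}(A)$ and below $m(A)$, so the finitely many conjuncts of $M^{+}(A)$ suffice.
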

The first proof of the theorem was given in \cite{Bek10a} by providing a complete Kripke semantics for $\mathsf{GLP}$.\footnote{In \cite{Bek10a}, the formula $M (A)$ was misstated. The correct version was given in \cite{Bek11, BG13}.}
Other proofs, on the basis of arithmetical semantics and topological semantics, were given in \cite{Bek11, BG13}.
Now we present a new proof of this fact on the basis of the cut elimination theorem.

Let us define the sequent system $\mathsf{J_{NS}}$ for the logic $\mathsf{J}$. Initial sequents and logical rules of $\mathsf{J_{NS}}$ have the same form as of $\mathsf{GLP_{NS}}$, and the modal rules are the following:
\begin{gather*}
\AXC{$\Gamma \{ [A, \Diamond_i \overline{A}]_i \}$}
\LeftLabel{$\mathsf{\Box}$}
\UIC{$\Gamma \{ \Box_i A\}$}
\DisplayProof \quad 
\AXC{$\Gamma \{\Diamond_i A, [A, \Delta ]_i\}$}
\LeftLabel{$\mathsf{\Diamond^\prime}$}
\UIC{$\Gamma \{\Diamond_i A, [\Delta ]_i\}$}
\DisplayProof\\\\
\AXC{$\Gamma \{\Diamond_i A, [\Diamond_i A, \Delta ]_j\}$}
\LeftLabel{$\mathsf{tran}$}
\RightLabel{$(i\leqslant j)$}
\UIC{$\Gamma \{\Diamond_i A, [\Delta ]_j\}$}
\DisplayProof \quad
\AXC{$\Gamma \{\Diamond_i A, [\Diamond_j A, \Delta ]_i\}$}
\LeftLabel{$\mathsf{tran^\prime}$}
\RightLabel{$(i\leqslant j)$}
\UIC{$\Gamma \{\Diamond_i A, [\Delta ]_i\}$}
\DisplayProof\\\\
\AXC{$\Gamma \{\Diamond_i A, [\Diamond_i A, \Delta ]_j\}$}
\LeftLabel{$\mathsf{eucl}$}
\RightLabel{$(i < j)$}
\UIC{$\Gamma \{ [\Diamond_i A, \Delta ]_j\}$}
\DisplayProof \:.
\end{gather*}
\begin{lem}
$\mathsf{J_{NS}} + \mathsf{weak} + \mathsf{cont} \vdash \Gamma \Rightarrow \mathsf{J} \vdash  \Gamma^\sharp$.
\end{lem}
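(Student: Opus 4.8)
The plan is to argue by induction on the height of the given derivation in $\mathsf{J_{NS}} + \mathsf{weak} + \mathsf{cont}$, showing for every initial sequent and every rule that provability in $\mathsf{J}$ of the interpretations of the premises yields provability of the interpretation of the conclusion. The whole argument rests on two \emph{context lemmas} that let one reason locally at the hole and then propagate the result through the surrounding context. First I would prove a \emph{monotonicity lemma}: for every unary context $\Gamma\{\,\}$ and sequents $\Upsilon,\Upsilon'$, if $\mathsf{J}\vdash \Upsilon^\sharp\to\Upsilon'^\sharp$ then $\mathsf{J}\vdash \Gamma\{\Upsilon\}^\sharp\to\Gamma\{\Upsilon'\}^\sharp$. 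This is established by induction on the depth of the hole: at a disjunction the step is propositional, and to pass under a modal box $\Box_j$ one necessitates the inductive implication and distributes it using the normality axiom (ii). The same induction, started from a theorem at the hole, yields the lifting statement that $\mathsf{J}\vdash\Upsilon^\sharp$ implies $\mathsf{J}\vdash\Gamma\{\Upsilon\}^\sharp$. Second, for the two-premise rule $\mathsf{\wedge}$ I would prove a \emph{conjunction lemma} $\mathsf{J}\vdash (\Gamma\{A\}^\sharp\wedge\Gamma\{B\}^\sharp)\to\Gamma\{A\wedge B\}^\sharp$, again by induction on the context, using the propositional distributivity $(R\vee A)\wedge(R\vee B)\leftrightarrow R\vee(A\wedge B)$ at disjunctions and the normal-logic law $\Box_j C\wedge\Box_j D\to\Box_j(C\wedge D)$ under boxes.

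With these tools the inductive step reduces to a purely local verification at the hole for each rule. The initial sequents interpret a tautology ($p\vee\overline p$, or $\top$) at the hole and are handled by lifting; the rule $\mathsf{\vee}$ is the identity locally; $\mathsf{\wedge}$ is the conjunction lemma; and $\mathsf{weak}$, $\mathsf{cont}$ follow from monotonicity via $\bot\to\Delta^\sharp$ and the idempotence of $\vee$. The content of the lemma is concentrated in the modal rules, each of which matches exactly one modal axiom of $\mathsf{J}$. Since $[A,\Diamond_i\overline A]_i$ interprets to $\Box_i(\Box_i A\to A)$, the local implication for $\mathsf{\Box}$ is Löb's axiom (iii). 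The rule $\mathsf{\Diamond^\prime}$ needs only the normal modal principle $\Box_i p\wedge\Diamond_i q\to\Diamond_i(p\wedge q)$, i.e.\ plain $\mathsf{K}$. The rule $\mathsf{eucl}$ (with $i<j$) is exactly axiom (iv), $\Diamond_i A\to\Box_j\Diamond_i A$; the rule $\mathsf{tran}$ reduces to axiom (vi), $\Box_i\overline A\to\Box_j\Box_i\overline A$; and $\mathsf{tran}^\prime$ reduces to axiom (vii), $\Box_i\overline A\to\Box_i\Box_j\overline A$. In each of the latter three cases one assumes the negation $\Box_i\overline A$ of the side diamond, pushes it under the relevant box by the matching axiom, and recombines with the boxed premise by normality to derive the conclusion locally; monotonicity then lifts the local implication through $\Gamma\{\,\}$.

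The main obstacle I anticipate is bookkeeping rather than genuine difficulty: I must ensure that the interpretation is handled uniformly when a hole is filled by a whole boxed block rather than by a single formula, and that the empty-node case (interpreted as $\bot$) is treated correctly in the monotonicity and weakening steps. The one conceptual point to watch is that no rule of $\mathsf{J_{NS}}$ secretly requires the full axiom (v) of $\mathsf{GLP}$. This is precisely why $\mathsf{J_{NS}}$ omits the monotonicity rule $\mathsf{mon}$ and splits the modal behaviour into $\mathsf{tran}$, $\mathsf{tran}^\prime$ and $\mathsf{eucl}$, so that $\mathsf{eucl}$ at $i<j$ invokes only (iv) and the two transitivity rules invoke only (vi) and (vii), all of which are available in $\mathsf{J}$; verifying that these are the only modal principles used is the crux of the soundness claim.
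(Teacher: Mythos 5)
Your proposal is correct. Note that the paper gives no proof of this lemma at all: it is stated bare, treated as the routine soundness direction of the translation (just as the left-to-right half of the earlier proposition relating $\mathsf{GLP_{NS}}+\mathsf{cut}$ to $\mathsf{GLP}$ is dismissed as ``obvious''), so your argument is precisely the standard reasoning the author leaves implicit. Your two context lemmas (monotonicity through a context via necessitation and axiom (ii), and the conjunction lemma for the two-premise rule) together with the local verifications are exactly right, and the key local matches check out: $[A,\Diamond_i\overline{A}]_i$ interprets as $\Box_i(\Box_i A\rightarrow A)$ so $\mathsf{\Box}$ is L\"{o}b's axiom (iii), $\mathsf{\Diamond^\prime}$ needs only K-reasoning, $\mathsf{eucl}$ (with $i<j$) is axiom (iv), and $\mathsf{tran}$ and $\mathsf{tran^\prime}$ reduce to axioms (vi) and (vii) respectively, with $\mathsf{weak}$ and $\mathsf{cont}$ handled propositionally; so the proposal is a complete and faithful filling-in of the omitted proof.
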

For a sequent $\Gamma$ let $m(\Gamma) = m(\Gamma^\sharp$).
Define 
\[ W(\Gamma):= \bigcup_{k<s} \bigcup_{i_k < j \leqslant m(\Gamma)} \{\Box_{i_k} A_k \wedge \overline{\Box_{j} A_k}  \}, \]
where $\Box_{i_k} A_k$ for $k<s$ are all subformulas of $\Gamma^\sharp$ (and of $\overline{\Gamma^\sharp}$) of the form $\Box_i B$. Set
\begin{gather*}
\Diamond_i W(\Gamma) := \{\Diamond_i B \colon B \in W(\Gamma) \},\quad
W^+(\Gamma):= W(\Gamma) \cup \bigcup_{i \leqslant m(\Gamma)} \Diamond_i W(\Gamma) .
\end{gather*}
Given a sequent $\Gamma$, let us consider the corresponding tree of $\Gamma$ and extend all multisets of formulas in the nodes of \emph{tree}($\Gamma$) by the multiset $W^+ (\Gamma)$. The result of the procedure is denoted by $\Gamma^\ast$. Note that $\{ A \}^{\ast \sharp}$ is equivalent with $M^{+} (A) \rightarrow  A$ in $\mathsf{J}$.

\begin{lem}\label{GLP-J-Seq}
$\mathsf{GLP_{NS}} \vdash \Gamma \Rightarrow \mathsf{J_{NS}}  + \mathsf{weak} + \mathsf{cont} \vdash  \Gamma^\star$.
\end{lem}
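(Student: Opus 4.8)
The plan is to prove Lemma~\ref{GLP-J-Seq} by induction on the height of a cut-free $\mathsf{GLP_{NS}}$-proof of $\Gamma$ (cut-freeness is available by Theorem~\ref{cut elim}). The statement to be established is that from such a proof we can construct a $\mathsf{J_{NS}} + \mathsf{weak} + \mathsf{cont}$ proof of $\Gamma^\star$, where $\Gamma^\star$ decorates every node of $\textit{tree}(\Gamma)$ with the auxiliary multiset $W^+(\Gamma)$. The single difficulty is that $\mathsf{GLP_{NS}}$ contains the modal rules $\mathsf{\Diamond}$ (with side condition $i \leqslant j$, allowing the boxed context to carry a strictly larger modality) and $\mathsf{eucl}$ (with $i < j$), whereas $\mathsf{J_{NS}}$ only has the restricted $\mathsf{\Diamond^\prime}$ (forcing $i = j$) together with the extra $\mathsf{tran^\prime}$. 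The role of the decoration $W^+$ is precisely to supply the ``missing'' transfer axioms $\Box_{i}A \to \Box_{j}A$ (for $i < j$) as hypotheses inside the sequent, so that the steps licensed in $\mathsf{GLP}$ by axiom~(v) become derivable in $\mathsf{J}$.

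First I would dispose of the easy cases: initial sequents map to initial sequents after adding $W^+$, and all propositional rules ($\wedge$, $\vee$) together with the rule $\mathsf{\Box}$ and the rules $\mathsf{tran}$, $\mathsf{eucl}$ (which are common to both systems) commute with the decoration essentially verbatim, since $W^+(\Gamma)$ is determined by the set of subformulas of $\Gamma^\sharp$ and is stable under these inferences; one only needs $\mathsf{weak}$ to reconcile minor mismatches in the auxiliary multiset and $\mathsf{cont}$ to absorb duplicated copies of $W^+$-formulas created when a boxed context is opened. The genuinely new work is the case of the rule $\mathsf{\Diamond}$ with a strict inequality $i < j$: here the $\mathsf{GLP_{NS}}$ inference moves a formula $A$ from beside $\Diamond_i A$ into a box labelled $j$, which $\mathsf{J_{NS}}$ cannot do directly. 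I would simulate this by first applying $\mathsf{\Diamond^\prime}$ where applicable and otherwise routing $A$ through the decorating formulas: the conjunct $\Box_{i}B \wedge \overline{\Box_{j}B}$ in $W(\Gamma)$, once dualised and boxed as an element of $\Diamond_i W(\Gamma)$, provides exactly the logical leverage (the transfer from level $i$ to level $j$) needed to reconstruct the effect of the $\mathsf{GLP}$ rule inside $\mathsf{J_{NS}}$. The rule $\mathsf{tran^\prime}$ of $\mathsf{J_{NS}}$ is then used to propagate the diamond witnesses along the tree at the matching modality.

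The key steps, in order, are therefore: (1)~set up the induction on proof height and check that $W^+(\Gamma)$ is invariant along any branch, so that the decoration of a premise agrees with that of the conclusion up to formulas addable by $\mathsf{weak}$; (2)~handle the structural and shared modal rules by direct translation; (3)~for the rule $\mathsf{\Diamond}$ with $i = j$ translate to $\mathsf{\Diamond^\prime}$ directly, and for $i < j$ build the simulating derivation that consumes the relevant $\Diamond_i W(\Gamma)$-witness together with $\mathsf{tran^\prime}$, $\mathsf{weak}$, and $\mathsf{cont}$; (4)~assemble these into the required $\mathsf{J_{NS}} + \mathsf{weak} + \mathsf{cont}$ proof of $\Gamma^\star$. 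The main obstacle is step~(3): one must verify that the decorating multiset really is rich enough to contain, for every instance of $\mathsf{\Diamond}$ or $\mathsf{eucl}$ that crosses modal levels, precisely the formula $\Box_{i_k}A_k \wedge \overline{\Box_{j}A_k}$ witnessing the level change, and that the side condition $i < j \leqslant m(\Gamma)$ in the definition of $W(\Gamma)$ matches the side conditions of the $\mathsf{GLP_{NS}}$ rules. Here the earlier admissibility of $\mathsf{weak}$ and $\mathsf{cont}$ in $\mathsf{J_{NS}}$ is indispensable for keeping the decoration uniform across the whole tree while copies of $W^+$ are created and merged.
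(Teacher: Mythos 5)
Your overall skeleton — induction on the height of the $\mathsf{GLP_{NS}}$-proof, checking that $W^+$ is invariant from conclusion to premise, direct translation of the shared rules, and a special simulation of $\mathsf{\Diamond}$ when $i<j$ — is indeed the paper's strategy, and your identification of the witness $\Box_{i}\overline{A}\wedge\overline{\Box_{j}\overline{A}} = \Box_i\overline{A}\wedge\Diamond_j A$ is the right idea for that case. Two details there are off, though: the witness actually consumed is the element of $W(\Gamma)$ itself, sitting at the very node where $\Diamond_i A$ occurs (the paper contracts it against the decoration, splits it with the $\wedge$-rule, closes the $\Box_i\overline{A}$-branch as an instance of the generalized axiom $\Gamma\{B,\overline{B}\}$, and gets the $\Diamond_j A$-branch from the induction hypothesis by $\mathsf{weak}$ and one application of $\mathsf{\Diamond^\prime}$); it is \emph{not} the ``dualised and boxed'' element of $\Diamond_i W(\Gamma)$, and $\mathsf{tran^\prime}$ plays no role in this case. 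Also $\mathsf{eucl}$ needs no witness at all: it is a rule of $\mathsf{J_{NS}}$ with the same side condition $i<j$, so it translates verbatim.

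The genuine gap is in your step (2): the rule $\mathsf{\Box}$ does \emph{not} commute with the decoration, and $\mathsf{weak}$/$\mathsf{cont}$ cannot repair the mismatch. By definition of $(\cdot)^\star$, the induction hypothesis applied to the premise $\Sigma\{[A,\Diamond_i\overline{A}]_i\}$ yields a proof of $\Delta\{[W^+(\Gamma),A,\Diamond_i\overline{A}]_i\}$, i.e.\ the newly created box node carries a full copy of $W^+(\Gamma)$. But the $\mathsf{J_{NS}}$ rule $\mathsf{\Box}$ requires the box to contain exactly $A,\Diamond_i\overline{A}$, and neither weakening (which only adds material) nor contraction (which merges two copies of a formula occurring at the \emph{same} node, whereas here the extra copy of $W^+(\Gamma)$ sits at a different node, inside the box) can delete it. The paper removes this material by repeated applications of $\mathsf{\Diamond^\prime}$, $\mathsf{tran}$ and $\mathsf{tran^\prime}$, which succeed precisely because the parent node's decoration contains $\Diamond_k W(\Gamma)$ for every $k\leqslant m(\Gamma)$: each $B\in W(\Gamma)$ inside the box at level $i$ is discharged against $\Diamond_i B$ outside via $\mathsf{\Diamond^\prime}$, and each $\Diamond_k B$ inside is discharged via $\mathsf{tran}$ (when $k\leqslant i$) or $\mathsf{tran^\prime}$ (when $k>i$). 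This is the entire purpose of passing from $W$ to $W^+$ (correspondingly, of the conjuncts $\Box_i M(A)$ in $M^+(A)$); since your proposal assigns the $\Diamond_i W(\Gamma)$-part of the decoration to the wrong case and offers only $\mathsf{weak}$/$\mathsf{cont}$ here, the induction breaks at every $\mathsf{\Box}$ inference.
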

\begin{proof}
Assume $\pi$ is a proof of $\Gamma$ in $\mathsf{GLP_{NS}} $. We prove $\mathsf{J_{NS}}  + \mathsf{weak} + \mathsf{cont} \vdash  \Gamma^\star$ by induction on $\lvert \pi \rvert$. If $\Gamma$ is an initial sequent, than $\Gamma^\star$ is also an initial sequent and $\mathsf{J_{NS}}  + \mathsf{weak} + \mathsf{cont} \vdash  \Gamma^\star$. Otherwise, consider the lowermost application of an inference rule in $\pi$.


Case 1. The lowermost inference has the form:
\[\AXC{$\Sigma \{ A \}$}
\AXC{$\Sigma \{ B \}$}
\LeftLabel{$\mathsf{\wedge}$}
\RightLabel{ .}
\BIC{$\Sigma \{ A \wedge B \}$}
\DisplayProof
\]
We see $W^+ (\Gamma) = W^+ (\Sigma \{ A \wedge B \})$, $W^+ (\Gamma) \supset W^+ (\Sigma \{ A \})$ and $W^+ (\Gamma) \supset W^+ (\Sigma \{ B \})$. We extend all multisets in the nodes of \emph{tree}($\Sigma \{ \:\}$) by the multiset $W^+ (\Gamma)$ and denote the result by $\Delta \{\:\}$. We have $(\Sigma \{  A \wedge B  \})^\ast = \Delta \{  A \wedge B  \}$. By the induction hypotheses, sequents $(\Sigma \{ A B \})^\ast$ and $(\Sigma \{ B \})^\ast$ are provable in $\mathsf{J_{Seq}}  + \mathsf{weak} + \mathsf{cont}$. We get 
\[\AXC{$(\Sigma \{ A \})^\ast$}
\LeftLabel{$\mathsf{weak}$}
\UIC{$\Delta \{A\}$}
\AXC{$(\Sigma \{ B \})^\ast$}
\LeftLabel{$\mathsf{weak}$}
\UIC{$\Delta \{B\}$}
\LeftLabel{$\mathsf{\wedge}$}
\RightLabel{ .}
\BIC{$\Delta \{ A \wedge B \}$}
\DisplayProof
\]
Hence, the sequent $(\Sigma \{ A \wedge B \})^\ast = \Delta \{A \wedge B \}$ is provable in $\mathsf{J_{NS}}  + \mathsf{weak} + \mathsf{cont}$.

Case 2. The lowermost application of an inference rule in $\pi$ has the form:
\[
\AXC{$\Sigma \{ A,B \}$}
\LeftLabel{$\mathsf{\vee}$}
\RightLabel{ .}
\UIC{$\Sigma \{ A \vee B \}$}
\DisplayProof
\]
We see $W^+ (\Gamma) = W^+ (\Sigma \{ A \vee B \}) = W^+ (\Sigma \{ A, B \})$. We extend all multisets in the nodes of \emph{tree}($\Sigma \{ \:\}$) by the multiset $W^+ (\Gamma)$ and denote the result by $\Delta \{\:\}$. We have $(\Sigma \{  A \vee B  \})^\ast = \Delta \{  A \vee B  \}$ and $(\Sigma \{  A, B  \})^\ast = \Delta \{  A, B  \}$. By the induction hypotheses, the sequent $(\Sigma \{ A, B \})^\ast$, which is equal to $\Delta \{  A, B  \}$, is provable in $\mathsf{J_{Seq}}  + \mathsf{weak} + \mathsf{cont}$. We get 
\[
\AXC{$\Delta \{ A,B \}$}
\LeftLabel{$\mathsf{\vee}$}
\RightLabel{ .}
\UIC{$\Delta \{ A \vee B \}$}
\DisplayProof
\]
Hence, the sequent $(\Sigma \{ A \vee B \})^\ast = \Delta \{A \vee B \}$ is provable in $\mathsf{J_{NS}}  + \mathsf{weak} + \mathsf{cont}$.

Case 3. The lowermost application of an inference rule in $\pi$ has the form: 
\[\AXC{$\Sigma \{ [A, \Diamond_i \overline{A}]_i \}$}
\LeftLabel{$\mathsf{\Box}$}
\UIC{$\Sigma \{ \Box_i A\}$}
\DisplayProof \;.
\] 
We see $W^+ (\Gamma) = W^+ (\Sigma \{ [A, \Diamond_i \overline{A}]_i \}) = W^+ (\Sigma \{ \Box_i A\})$. We extend all multisets in the nodes of \emph{tree}($\Sigma \{ \:\}$) by the multiset $W^+ (\Gamma)$ and denote the result by $\Delta \{\:\}$. We have $(\Sigma \{ [A, \Diamond_i \overline{A}]_i \})^\ast = \Delta \{ [ W^+ (\Gamma), A, \Diamond_i \overline{A}]_i \}$ and $(\Sigma \{ \Box_i A\})^\ast = \Delta \{ \Box_i A\}$. 
By the induction hypotheses, the sequent $(\Sigma \{ [ A, \Diamond_i \overline{A}]_i \} )^\ast$, which is equal to $ \Delta \{ [ W^+ (\Gamma), A, \Diamond_i \overline{A}]_i\}$, is provable in $\mathsf{J_{Seq}}  + \mathsf{weak} + \mathsf{cont}$. We have 
\[\AXC{$\Delta \{ [ W^+ (\Gamma), A, \Diamond_i \overline{A}]_i \}$}
\doubleLine
\LeftLabel{$ \mathsf{{\Diamond^\prime}^\ast},\mathsf{tran^\ast},\mathsf{{tran^\prime}^\ast} $}
\RightLabel{ .}
\UIC{$\Delta \{ [ A, \Diamond_i \overline{A}]_i \}$}
\LeftLabel{$\mathsf{\Box}$}
\UIC{$\Delta \{ \Box_i A\}$}
\DisplayProof \] 
Hence, the sequent $(\Sigma \{ \Box_i A\})^\ast = \Delta \{ \Box_i A\}$ is provable in $\mathsf{J_{NS}}  + \mathsf{weak} + \mathsf{cont}$.

Case 4. The last application of an inference rule in $\pi$ has the form:
\[\AXC{$\Sigma \{\Diamond_i A, [A, \Upsilon ]_j\}$}
\LeftLabel{$\mathsf{\Diamond}$}
\RightLabel{$(i \leqslant j)$.}
\UIC{$\Sigma \{\Diamond_i A, [\Upsilon ]_j\}$}
\DisplayProof\]
We see $W^+ (\Gamma) = W^+ (\Sigma \{\Diamond_i A, [\Upsilon ]_j\}) = W^+ (\Sigma \{\Diamond_i A, [A,\Upsilon ]_j\})$. We extend all multisets in the nodes of \emph{tree}($\Sigma \{ \:\}$) by the multiset $W^+ (\Gamma)$ and denote the result by $\Delta \{\:\}$. We have $(\Sigma \{\Diamond_i A, [A, \Upsilon ]_j\} \})^\ast = \Delta \{ \Diamond_i A, [  A, \Upsilon^\ast]_j \}$ and $(\Sigma \{\Diamond_i A, [ \Upsilon ]_j\})^\ast = \Delta \{ \Diamond_i A, [ \Upsilon^\ast]_j\}$. 
By the induction hypotheses, the sequent $(\Sigma \{\Diamond_i A, [A, \Upsilon ]_j\} \})^\ast$, which is equal to $\Delta \{ \Diamond_i A, [  A, \Upsilon^\ast]_j$, is provable in $\mathsf{J_{Seq}}  + \mathsf{weak} + \mathsf{cont}$. If $i=j$, then we get
\[
\AXC{$\Delta \{ \Diamond_i A, [  A, \Upsilon^\ast]_i \}$}
\LeftLabel{$\mathsf{\Diamond^\prime}$}
\RightLabel{ .}
\UIC{$\Delta \{ \Diamond_i A, [ \Upsilon^\ast]_i\}$}
\DisplayProof\]
Otherwise, we have $i<j$ and
\[\AXC{$\mathsf{Ax}$}
\noLine
\UIC{$\Delta \{\Diamond_i A, \Box_i \overline{A} ,  [\Upsilon^\ast ]_j\}$}
\AXC{$\Delta \{ \Diamond_i A, [  A, \Upsilon^\ast]_j$}
\LeftLabel{$\mathsf{weak}$}
\UIC{$\Delta \{ \Diamond_i A, \Diamond_j A,[  A, \Upsilon^\ast]_j$}
\LeftLabel{$\mathsf{\Diamond^\prime}$}
\UIC{$\Delta \{ \Diamond_i A, \Diamond_j A,[   \Upsilon^\ast]_j$}
\LeftLabel{$\mathsf{\wedge}$}
\BIC{$\Delta \{ \Diamond_i A, \Box_i \overline{A} \wedge \Diamond_j A, [\Upsilon^\ast ]_j\}$}
\LeftLabel{$\mathsf{cont}$}
\RightLabel{$(\Box_i \overline{A} \wedge \Diamond_j A \in W(\Gamma))$.}
\UIC{$\Delta \{ \Diamond_i A, [\Upsilon^\ast ]_j\}$}
\DisplayProof  \] 
Hence, the sequent $(\Sigma \{\Diamond_i A, [ \Delta ]_j\})^\ast = \Delta \{ \Diamond_i A, [ \Upsilon^\ast]_j\}$ is provable in $\mathsf{J_{NS}}  + \mathsf{weak} + \mathsf{cont}$.

The remaining cases of the rules $\mathsf{tran}$ and $\mathsf{eucl}$ are completely analogous to the case 2, so we omit them. 
\end{proof}

\begin{proof}[Proof of Theorem \ref{J-GLP}]
For any formula $A$, $\mathsf{GLP} \vdash M^{+} (A)$. Thus the left-to-right part is obvious. 
Prove the converse. 

If $\mathsf{GLP} \vdash A$, then $\mathsf{GLP_{NS}} + \mathsf{cut} \vdash A$. By Theorem \ref{cut elim}, this yields $\mathsf{GLP_{NS}} \vdash A$. Applying Lemma \ref{GLP-J-Seq}, we have $\mathsf{J_{NS}}  + \mathsf{weak} + \mathsf{cont} \vdash \{A\}^\ast$. Hence, $\mathsf{J} \vdash M^{+} (A) \rightarrow  A$.
\end{proof}
\paragraph*{Acknowledgements.} I would like to thank Kai Br\"{u}nnler for introducing me to the calculus of nested sequents. Additionally, I am thankful to my wife Maria Shamkanova for her personal support during the work on the paper.

\printbibliography

\end{document}